\newtheoremstyle{mplain}
{\topsep}   
{\topsep}   
{\itshape}  
{0pt}       
{\scshape} 
{:}         
{5pt plus 1pt minus 1pt} 
{}          
\newenvironment{scproof}[1][\proofname]{\proof[\textsc{#1}]}{\endproof}
\theoremstyle{mplain}
\newtheorem{theorem}{Theorem}[]
\crefname{theorem}{Theorem}{Theorems} 
\Crefname{theorem}{Theorem}{Theorems}
\newtheorem{lemma}[theorem]{Lemma}
\crefname{lemma}{Lemma}{Lemmas} 
\Crefname{lemma}{Lemma}{Lemmas}
\newtheorem{proposition}[theorem]{Proposition}
\crefname{proposition}{Proposition}{Propositions} 
\Crefname{proposition}{Proposition}{Propositions}
\crefname{cor}{Corollary}{Corollaries}
\newtheorem{definition}[theorem]{Definition}
\crefname{definition}{Definition}{Definitions} 
\Crefname{definition}{Definition}{Definitions}
\newtheorem{remark}[theorem]{Remark}
\crefname{remark}{Remark}{Remarks} 
\Crefname{remark}{Remark}{Remarks}
\newtheorem{conj}[theorem]{Conjecture}
\Crefname{conj}{Conjecture}{Conjectures}
\newtheorem{problem}[theorem]{Problem}
\crefname{problem}{problem}{problems} 
\Crefname{problem}{Problem}{Problems}
\newtheorem{claim}{Claim}[]
\crefname{claim}{Claim}{Claims} 
\Crefname{claim}{Claim}{Claims}
\numberwithin{theorem}{section}
\newcommand{\E}{\mathbb{E}}
\renewcommand{\Pr}{\mathbb{P}}
\newcommand{\eps}{\varepsilon}
\newcommand{\Op}{O_\mathbb{P}}
\newcommand{\F}{\mathcal{F}}
\newcommand{\Sd}[1][d]{\mathcal{S}^{(#1)}}
\newcommand{\Sthree}{\Sd[3]}
\newcommand{\Stwo}{\Sd[2]}
\newcommand{\Rbn}[1]{\ensuremath{R^{(#1)}_{\beta,n}}}
\newcommand{\Rnd}[1]{\ensuremath{R^{(#1)}_n}}
\newcommand{\Rntwo}{\ensuremath{\Rnd{2}}}
\newcommand{\medm}{\mu}
\newcommand{\R}{\mathbb{R}}
\newcommand{\N}{\mathbb{N}}
\newcommand{\vx}{x}
\newcommand{\vy}{y}
\newcommand{\vz}{z}
\DeclareMathOperator{\Var}{Var}
\title{Random triangulations of the $d$-sphere with minimum volume}
\author[1]{Agelos Georgakopoulos\thanks{Supported by EPSRC grants EP/V048821/1 and EP/V009044/1. All authors have been supported by the European Research Council (ERC) under the European Union’s Horizon 2020 research and innovation programme (Grant Agreement No. 639046).}}
\author[2]{John Haslegrave}
\author[3]{Joel Larsson Danielsson}
\affil[1]{University of Warwick, UK}
\affil[2]{Lancaster University, UK}
\affil[3]{Chalmers University of Technology, Sweden}
\date{}
\begin{document}
\maketitle

\begin{abstract}
We study a higher-dimensional analogue of the {Random Travelling Salesman Problem}: let the complete $d$-dimensional simplicial complex $K_n^{d}$ on $n$ vertices be equipped with 
i.i.d.\ volumes on its facets, uniformly random in $[0,1]$. 
What is the minimum volume $M_{n,d}$ of a sub-complex homeomorphic to the $d$-dimensional sphere $\mathbb{S}^d$, containing all vertices?  We determine the growth rate of $M_{n,2}$, and prove that it is well-concentrated. For $d>2$ we prove such results to the extent that current knowledge about the number of triangulations of $\mathbb{S}^d$ allows.

We remark that this can be thought of as a model of random geometry in the spirit of Angel \& Schramm's UIPT, and provide a generalised framework that interpolates between our model and the uniform random triangulation of $\mathbb{S}^d$.
\end{abstract}

\section{Introduction}
In this paper we study the following analogue of the \emph{Random Travelling Salesman Problem}, in line with a recent trend of higher-dimensional generalisations of graph-theoretic results.
Let the complete $d$-dimensional simplicial complex $K_n^{d}$ on $n$ vertices be equipped with 
i.i.d.\ costs on its facets, uniform in $[0,1]$. 
What is the minimum volume $M_{n,d}$ of a \emph{spanning} sub-complex of $K_n^{d}$ (that is, containing all vertices) homeomorphic to the $d$-dimensional sphere $\mathbb{S}^d$? (The \emph{volume} of a complex is just the sum of costs of its facets.)  We determine the growth rate of $M_{n,2}$, and prove that $M_{n,2}$ is well-concentrated around its mean. For $d>2$ we prove upper and lower bounds and concentration results, but closing the gaps will require progress on the enumeration of triangulations of $\mathbb{S}^d$, which is notoriously difficult.

\subsection{Background and motivation}
In the graph case, $d=1$, our question  asks for the minimum cost of a Hamilton cycle of the complete graph $K_n$. This problem was studied by Frieze \cite{Fri04}, who showed that it has constant cost and is close to the minimum $2$-factor with high probability.  Subsequently, W\"astlund determined the limiting constant precisely \cite{Was10}. A similar, though simpler, problem asks for the spanning tree of minimum cost in $K_n$. This is known to have expectation $\zeta(3)+o(1)$ \cite{Fri85}. 
For the directed case, Karp \cite{Kar79} proved a relationship to the assignment problem, which Aldous \cite{Ald01} used to establish that the expectation tends to $\zeta(2)=\pi^2/6$.

\medskip
Part of the motivation for studying the case $d\geq 2$ comes from the emerging random geometry.
The influential work of Angel \& Schramm \cite{AnSchrUn} advanced the idea of considering a random triangulation \Rntwo\ of the 2-sphere chosen uniformly among all such triangulations with $n$ vertices, as a model of random planar geometry.  Angel \& Schramm \cite{AnSchrUn} proved that \Rntwo\ converges locally, in the sense of Benjamini \& Schramm \cite{BeSchrRec}. The limit is a random triangulation of $\mathbb{R}^2$, called the Uniform Infinite Planar Triangulation. We expect our minimum-volume spanning sphere to converge in the same sense, and lead to a different model of random geometry. Another well-known convergence result about \Rntwo, by Le Galle \cite{LeGBro} and Miermont \cite{MieBro}, is that it has a scaling limit which coincides with the \emph{Brownian map}, a model of random fractal geometry studied extensively due to connections with mathematical physics  (\cite{GwyRan}).

In fact one can interpolate between our model and the uniform triangulation $\Rntwo$ by considering a Boltzmann distribution with an (inverse) temperature parameter $\beta$. The two extremes, for $\beta=0$ and $\beta=\infty$ of this distribution correspond to \Rntwo\ and our model respectively. It would be very interesting to understand the phase-transition phenomena as $\beta$ varies. We provide concrete definitions and questions in Section~\ref{Outlook}.

\medskip
There has been much previous work on extending notions of (Hamilton) cycles to hypergraphs in an essentially one-dimensional way; see \cite{KOsurvey} for a survey of results in this area. The topological approach we consider here has emerged recently as the focus of several papers \cite{GHMN,ILMPS,KLNS,KPTZ,LNY22,luria-tessler}, and fits naturally into Linial's `higher-dimensional combinatorics' programme, see e.g.\ \cite{LP16}.

Linial \& Meshulam studied the homology of $2$-dimensional random simplicial complexes \cite{LM06}, which was subsequently extended to higher dimensions by Meshulam \& Wallach \cite{MW09}. This triggered a lot of research on random simplicial complexes analogous to work on Erd\H os--R\`enyi random graphs. In particular, Luria \& Tessler \cite{luria-tessler} determined the threshold for the emergence of a spanning $2$-sphere in the Linial--Meshulam model, using a delicate second-moment argument. Moving away from spanning structures, Benjamini, Lubetzky \& Peled determined the minimum cost of a disc triangulation with boundary $u,v,w$ \cite{min-weight-disk-triangulation}, which may be thought of as a $2$-dimensional analogue of the shortest path between two vertices. 

Besides probabilistic results, there has also been an interest in extending extremal results from graphs to higher dimensions topologically, by replacing cycles by triangulations of $\mathbb{S}^d, d\geq 2$. For example, Georgakopolous, Haslegrave, Montgomery \& Narayanan \cite{GHMN} asymptotically determined the minimum codegree guaranteeing a spanning 2-sphere in a $3$-graph. See e.g.\ \cite{ILMPS,KLNS,KPTZ,LNY22} for further results with a similar flavour.

\subsection{Our setup}
Instead of the complex $K_n^{d}$, we will work with the complete $(d+1)$-uniform hypergraph $K_n^{(d)}$ on $n$ vertices. Let $\Sd_n$ be the set of sub-hypergraphs of $K_n^{(d)}$ that form a combinatorial $d$-sphere spanning all $n$ vertices. In other words, $\Sd_n$ is the set of spanning subgraphs $S$ such that the simplicial complex formed by taking the downward closure of $E(S)$ is homeomorphic to $\mathbb{S}^d$. We endow the (hyper)edges of $K_n^{(d)}$ with i.i.d.\ random costs, uniformly distributed in $[0,1]$. We write $W_e$ for the random cost of the edge $e\in E(K_n^{(d)})$, and $W_S$ for the total \emph{cost}, or \emph{volume}, $\sum_{e\in E(S)} W_e$ of a subgraph $S$, and $M(\mathcal{S})$ for the minimum volume $\min_{S\in \mathcal{S}} W_S$ of a family $\mathcal S$ of subgraphs of $K_n^{(d)}$.

We call $\Sd_n$ the set of \emph{spanning spheres}, and refer to the spanning sphere $S\in\Sd_n$ achieving this minimum (which is almost surely unique) as the \emph{minimal spanning sphere}. In this paper, we prove upper and lower bounds on $\E(M(\Sd_n))$. For $d=2$ these match up to a constant factor. For dimensions $2$ and $3$ we also show that this random variable is sharply concentrated around its mean.

Key difficulties in dimensions $d>2$ are the absence of exact asymptotics for the number of spanning spheres, and the fact that the number of facets in a spanning sphere is not fixed.
Enumerating the spheres is trivial in dimension $1$, and is a classical result of Tutte \cite{tutte1962census} in dimension $2$.

\subsection{Our results}
Our main results are concentration inequalities as well as upper and lower bounds for $M(\Sd_n)$ for $d= 2,3$. These together imply sharp concentration: that $M(\Sd_{n})/\mu^{(d)}_n\to 1$ in probability as $n\to \infty$, where $\mu^{(d)}_n$ denotes the median of $M(\Sd_{n})$.

The upper bound holds for $d\geq 4$ as well, but for the lower bounds and concentration inequality we need to make an additional assumption on the number of $d$-spheres with a given number of facets.
Under an even stronger assumption, we prove matching upper and lower bounds as well as concentration for any $d\geq 2$.

Upper bounds are proven using an explicit construction, while lower bounds are proven using a first moment method. For sharp concentration, we use the Talagrand inequality. In dimension 2, however, this turns out to not be sharp enough, and we instead use a recent concentration inequality by the third author~\cite{patch-ineq} which builds on the Talagrand inequality.

\begin{theorem}[$d=2$]
\label{thm:2sphere}
Let $\mu^{(2)}_n$ denote the median of $M(\Stwo_n)$. 
\begin{enumerate}[(i)]
    \item Letting $\alpha:=\sqrt{3^3/4^3e}\approx 0.394$, we have 
    \begin{align*}
    \alpha-o(1)\leq \frac{\mu^{(2)}_n}{\sqrt{n}}\leq \frac{e}{2}\alpha+o(1).
    \end{align*}
    \item With probability at least $1-\exp(-n^{0.02})$,
    \[
    |M(\Stwo_{n})-\mu^{(2)}_n|\leq n^{0.4}.
    \]
\end{enumerate}
\end{theorem}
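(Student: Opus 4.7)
The plan is to prove (i) by combining a first-moment lower bound with an explicit upper-bound construction, and to prove (ii) by applying the patch concentration inequality of~\cite{patch-ineq}.

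For the lower bound in (i), Euler's formula forces every spanning 2-sphere to have exactly $f := 2n-4$ triangular facets, so for each fixed $S \in \Stwo_n$ the volume $W_S$ is a sum of $f$ i.i.d.\ uniforms on $[0,1]$, and the Irwin--Hall distribution gives $\Pr(W_S \leq t) \leq t^f/f!$. Tutte's enumeration of triangulations of $\mathbb{S}^2$~\cite{tutte1962census}, combined with vertex labellings, gives $\log|\Stwo_n| = n\log n + n\log(256/(27e)) + O(\log n)$. Markov's inequality then gives
\[
\Pr(M(\Stwo_n) \leq t) \leq |\Stwo_n| \cdot t^f/f!,
\]
which, using Stirling's approximation $(f!)^{1/f} \sim 2n/e$, tends to zero at $t = (\alpha-\eps)\sqrt n$ with $\alpha^2 = 27/(64e)$. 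The constant $\alpha$ emerges as the unique balance point between $(f!)^{1/f}$ and $|\Stwo_n|^{1/f}$, giving the lower bound on $\mu_n^{(2)}$.

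For the upper bound I would exhibit an explicit parameterised family $\mathcal F \subseteq \Stwo_n$ with sufficiently controllable pairwise facet overlap, and argue that the expected number of members of weight $\leq (e\alpha/2 + \eps)\sqrt n$ diverges, promoting this to a high-probability statement by a truncated second-moment argument or a staged union bound. Natural candidates include bipyramid triangulations---where, for fixed apex vertices $u,v$, minimising over the equatorial cyclic ordering reduces to a min-weight Hamilton-cycle problem in $K_{n-2}$ with edge costs distributed as Irwin--Hall$(2)$, tractable by the Frieze--W\"astlund random-assignment framework adapted to densities vanishing linearly at zero---or the family of spheres obtained by gluing two triangulations of the $n$-gon along a common equatorial Hamilton cycle. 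The residual factor $e/2$ over the lower-bound constant reflects the gap between the first-moment threshold and the high-probability threshold.

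For part (ii) I would apply the patch inequality of~\cite{patch-ineq} to $M(\Stwo_n)$ as a function of the $\binom{n}{3}$ i.i.d.\ edge costs. The function is $1$-Lipschitz coordinatewise and certifiable by the $O(n)$ facets of the minimiser itself; but since $\mu_n^{(2)}$ is already of order $\sqrt n$, the direct Talagrand inequality only yields $\sqrt n$-scale concentration, which is vacuous. The patch inequality sharpens Talagrand by exploiting that a local perturbation of the weights leaves the global minimiser essentially unchanged outside a small patch, yielding the claimed $n^{0.4}$-scale concentration with failure probability $\exp(-n^{0.02})$. The main obstacle I expect is the upper-bound construction in~(i): identifying a family of spheres both rich enough to approach the target constant $(e/2)\alpha$ and with pairwise-overlap statistics tame enough that the second-moment calculation closes.
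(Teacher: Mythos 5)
Your lower bound for part (i) matches the paper exactly: first moment with the Irwin--Hall tail $L^m/m!$, Tutte's count \eqref{tutte-enum}, and Markov. That part is fine.

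\textbf{Upper bound in (i).} This is where your route diverges, and the divergence matters because the constant $\frac{e}{2}\alpha$ is part of the statement. You propose either a bipyramid construction reduced to a TSP with Irwin--Hall$(2)$ costs via the Frieze--W\"astlund framework, or a second-moment argument over an explicit family. The paper does use the bipyramid/TSP route for the general-$d$ bound (\cref{UB:polar-sphere}), but there it only gives $c_d n^{1-1/d}$ with an unoptimised $c_d > d!\,\beta_{\mathrm{TSP}}(d)$, and the paper explicitly notes in a footnote that this is \emph{worse} for $d=2$ than what is needed. The paper's actual proof of the $\frac{e}{2}\alpha$ constant imports the Luria--Tessler threshold $p_c=\sqrt{e/\gamma n}$ for existence of a spanning sphere in the Linial--Meshulam model, couples the weighted model to the threshold model by retaining faces of cost $\leq p$, and then gains the extra factor $2$ by observing that conditionally on a sphere surviving the thinning, its face costs are i.i.d.\ $U(0,p)$ with mean $p/2$, so its total cost concentrates around $mp/2$ rather than $mp$. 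Your framing of $\frac{e}{2}\alpha$ as ``the gap between the first-moment threshold and the high-probability threshold'' is a reasonable heuristic, but if you want to run the second-moment route yourself you would essentially be reproving Luria--Tessler's (described in the paper as ``delicate'') argument, and it is not clear a bipyramid family alone is rich enough: the restriction to bipyramids is exactly what inflates the constant in \cref{UB:polar-sphere}.

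\textbf{Part (ii).} You correctly observe that plain Talagrand is vacuous at scale $\sqrt n$ and that the patch inequality of~\cite{patch-ineq} is the right tool, but your proposal treats the patchability hypothesis as if it were automatic. It is not: establishing that $\Stwo_n$ is $(k,\lambda,\eps)$-patchable with $\lambda$ of order $k^{3/4}/n^{1/12}$ (so that $\lambda\ll\mu$ at the scale $k\approx n^{0.51}$) is the entire content of the paper's \cref{lemma:2-patch}, and it is nontrivial. The paper's argument requires (a) an asymmetric version of the Lipton--Tarjan planar separator theorem to carve out a set $Q$ of $\approx s$ free vertices bounded by a short cycle $C$, (b) retriangulating $C$ without interior vertices to recover a sphere on $n-|Q|$ vertices with only $k+O(\sqrt s)$ ``red'' faces, (c) repairing the sphere by barycentric subdivision of the red faces using the freed vertices, and (d) a greedy-matching Chernoff analysis, which itself needs a $4$-colouring of the face-dual graph plus pruning to make the relevant costs independent, followed by an optimisation over $s$. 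Without this (or an equivalent) construction, you have no way to verify the hypothesis of \cref{thm:patchability-ineq}, so part (ii) is not actually proved in your proposal; ``a local perturbation leaves the minimiser essentially unchanged outside a small patch'' is the conclusion you need, not an input you can assume.
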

We remark that the constants of the lower and upper bound of Theorem~\ref{thm:2sphere} (i) differ by a multiplicative factor of only $e/2 \approx 1.36$. 

We also consider a variant of our model where the (independent, uniform) costs are put on the pairs of vertices of $K_n^2$; see \cref{cor} for details.

\medskip
We next consider higher dimensions. For each $d\geq 2$ let $\beta_d$ be a constant such that at most $2^{O(m)} m^{\beta_d m}$ spheres in $\Sd_n$ have $m$ facets, for every $m$; such constants are known to exist but the optimal values are unknown for $d\geq 3$ \cite{howmanytriang:rivasseau}.
\begin{theorem}
\label{thm:dsphere}
The following hold for any $d\geq 2$.
\begin{enumerate}[(i)]
\item There exists a constant $b=b(d)$ such that  $M(\Sd_n)\leq bn^{1-1/d}$ with high probability.
    \item If $\beta_d\leq 1$, then $M(\Sd_n)\geq an^{1-\beta_d}$ for some constant $a=a(d)$, w.h.p.
    \item If $\beta_d<1/2$, then there exists some $c>0$ such that $\Pr(|M(\Sd_n)-\mu^{(d)}_n|>t) \leq e^{-ct^2/n}$ for any $t>0$.
\end{enumerate}
\end{theorem}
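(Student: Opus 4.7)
The three parts are largely independent: part (i) is by an explicit construction, (ii) by a first-moment computation, and (iii) by Talagrand's inequality fed with a bound on the certificate size drawn from (i) and (ii).

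\emph{Part (i): explicit construction.} The idea is to build a spanning $d$-sphere with $\Theta(n)$ facets in such a way that each facet can be chosen among many alternatives, so that the minimum weight per facet is of order $n^{-1/d}$. Concretely, one fixes a ``template'' sphere with $O(1)$ vertices (e.g.\ $\partial\Delta^{d+1}$) and processes the remaining $n-O(1)$ vertices one by one, each time performing a stellar subdivision that inserts the new vertex inside some existing facet and replaces that facet by $d+1$ new facets. After $\Theta(n)$ insertions the complex is a sphere with $\Theta(n)$ facets, and for each insertion there are roughly $n^{1/d}$ essentially independent combinatorial choices (which facet to split, in which orientation). Selecting the cheapest option at each step, one expects a local cost of $\Theta(n^{-1/d})$ per insertion, giving total weight $\Theta(n^{1-1/d})$.

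\emph{Part (ii): first moment.} Combine the hypothesis on the number of spheres with the Irwin--Hall bound $\Pr[U_1+\cdots+U_m\leq W]\leq W^m/m!$:
\begin{equation*}
\E[\#\{S\in\Sd_n:|S|=m,\ W_S\leq W\}]
\leq 2^{O(m)} m^{\beta_d m}\cdot \frac{W^m}{m!}
\leq \left(\frac{CeW}{m^{1-\beta_d}}\right)^m.
\end{equation*}
Every $d$-sphere on $n$ vertices has at least $cn$ facets (each vertex-link is a $(d-1)$-sphere with at least $d+1$ facets, and double-counting incidences gives $m(d+1)\geq n(d+1)$). For $W=an^{1-\beta_d}$ with $a<1/(Ce)$, the base $CeW/m^{1-\beta_d}\leq Cea\,(n/m)^{1-\beta_d}\leq Cea<1$ for all $m\geq cn$, so the summands are geometrically small and $\sum_{m\geq cn}(Cea)^m=o(1)$. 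Markov then gives $M(\Sd_n)\geq an^{1-\beta_d}$ w.h.p.

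\emph{Part (iii): Talagrand concentration.} The function $M(\Sd_n)$ is $1$-Lipschitz in each weight $W_e$, and the event $\{M\leq s\}$ is certified by any sphere $S$ with $W_S\leq s$: resampling the weights outside $E(S)$ cannot increase $M$ beyond $s$. Talagrand's ``certifiable function'' inequality then yields $\Pr(|M-\mu^{(d)}_n|>t)\leq C\exp(-ct^2/R)$, where $R$ is an upper bound on the size of the certificate used at weights close to the median. I would combine the upper bound from (i), which produces a certificate of size $O(n)$ (the explicit $\Theta(n)$-facet sphere), with a first-moment tail bound of the form
\begin{equation*}
\Pr[\exists S\in\Sd_n:|S|>Kn,\ W_S\leq bn^{1-1/d}]
\leq \sum_{m>Kn}\left(\frac{Ceb\, n^{1-1/d}}{m^{1-\beta_d}}\right)^m
\end{equation*}
to rule out overly large certifying spheres, so that one may take $R=O(n)$. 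The assumption $\beta_d<1/2$ enters both in controlling this tail and in guaranteeing that the lower bound $n^{1-\beta_d}\gg\sqrt n$ from (ii) makes the resulting bound $e^{-ct^2/n}$ meaningful on the whole relevant range $t\lesssim M$.

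\emph{Main obstacle.} The crux is the certificate-size bound in (iii). Optimising the exponent in the first-moment tail gives $R\sim n^{(1-1/d)/(1-\beta_d)}$, which exceeds $n$ as soon as $\beta_d>1/d$. The non-trivial idea is to avoid taking the \emph{actual} minimum sphere as the Talagrand certificate and instead take the \emph{constructed} sphere from (i), whose facet-count is $O(n)$ by design while its weight still certifies the event $\{M\leq bn^{1-1/d}\}$; feeding this into a median-scale Talagrand argument yields the $\exp(-ct^2/n)$ bound under the weaker hypothesis $\beta_d<1/2$.
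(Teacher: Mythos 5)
Your part (ii) is essentially the paper's argument and is fine, but parts (i) and (iii) each have a real gap.

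In part (i), the stellar-subdivision construction does not give exponent $1-1/d$. Each insertion of a new vertex $v$ into a facet $\sigma$ replaces $\sigma$ by $d+1$ new facets, so the incremental cost is a sum of $d+1$ i.i.d.\ uniforms (pseudo-dimension $d+1$). Greedily minimising over the $\Theta(n)$ available vertices gives an increment of order $n^{-1/(d+1)}$, and summing over $\Theta(n)$ steps yields only $\Theta(n^{1-1/(d+1)})$ --- the exponent that the paper obtains for the $d=2$ \emph{edge-cost} variant (\cref{cor}), not for the facet-cost problem here. Your heuristic ``$n^{1/d}$ essentially independent choices of local cost $\Theta(n^{-1/d})$'' does not correspond to anything in this construction. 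The paper uses a different sphere $S_C=\partial\bigl(\mathrm{Cone}^d(C)\bigr)$, a $d$-fold cone over a Hamilton cycle $C$ on $n-d$ vertices: each edge $xy$ of $C$ contributes exactly $d$ facets with disjoint vertex sets, giving i.i.d.\ edge costs $w_{xy}$ of pseudo-dimension $d$. This reduces the problem to a random TSP with pseudo-dimension-$d$ costs (W\"astlund, or a greedy Chernoff argument), giving $\Theta(n^{1-1/d})$.

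In part (iii), you correctly notice that enforcing a certificate bound $R=O(n)$ via a first-moment tail at level $bn^{1-1/d}$ forces $R\sim n^{(1-1/d)/(1-\beta_d)}\gg n$ once $\beta_d>1/d$. But your proposed remedy --- ``use the constructed sphere from (i) as the Talagrand certificate'' --- does not work. The constructed sphere has weight $\Theta(n^{1-1/d})$, which may vastly exceed the median $\mu^{(d)}_n$ (the proven lower bound is only $\Omega(n^{1-\beta_d})$), so it does \emph{not} certify $\{M\le s\}$ at levels $s$ near $\mu^{(d)}_n$, which is what a certifiable-Talagrand bound around the median requires. The paper does something genuinely different in \cref{thm:conc-general}: it truncates $\F=\Sd_n$ to $\F'$ (spheres with at most $m_1$ facets, for a suitably chosen $m_1\gg n$), proves $M(\F)=M(\F')$ w.h.p.\ by comparing an upper-tail estimate for $M(\F)$ (\cref{lemma:uppertail}, not the construction from (i)) against a first-moment lower bound on $M(\F'')$, and then applies Talagrand to $\F'$ with $R=m_1$. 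The crucial observation you are missing is that one does \emph{not} need $R=O(n)$; one only needs $\sqrt{R}\ll\mu$, i.e.\ $m_1\ll\mu^2$, while simultaneously $m_1^{1-\beta_d}\gg\mu$. Both are achievable precisely because $\beta_d<1/2$ gives $\mu\gg\sqrt n$, which is the actual role of that hypothesis.
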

It is known that $\beta_4\leq 1$~\cite{howmanytriang:rivasseau}, and so (ii) applies in dimension $4$, but for $d\geq 5$ this is not known. 
The third author has proved that $\beta_3\leq 8/21$~\cite{3-sphere-entropy}. Combining this with the lower bound in \cref{LB:general-graph-family} and the upper bound and concentration in \cref{thm:dsphere} above, we obtain the following.

\begin{theorem}[$d=3$]
\label{thm:3sphere} There are constants $c,C>0$ such that for all sufficiently large $n$,
\[
cn^{13/21}\leq \mu^{(3)}_n\leq Cn^{2/3},
\]
and furthermore
\[
|M(\Sthree_{n})-\mu^{(3)}_n|=\Op(\sqrt{n}).
\]
In particular, $M(\Sthree_{n})/\mu^{(3)}_n\to 1$ in probability as $n\to \infty$. 
\end{theorem}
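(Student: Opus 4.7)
The plan is to derive \Cref{thm:3sphere} by assembling the three statements of \Cref{thm:dsphere} with the third author's entropy bound $\beta_3\leq 8/21$ from \cite{3-sphere-entropy}; no new argument is required beyond arithmetic and the standard translation from a high-probability tail bound to a bound on the median.

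First I would apply part (i) of \Cref{thm:dsphere} with $d=3$: it immediately yields $M(\Sthree_n)\leq bn^{1-1/3}=bn^{2/3}$ w.h.p., and hence $\mu^{(3)}_n\leq Cn^{2/3}$ for any $C>b$ and $n$ sufficiently large. Next I would feed $\beta_3\leq 8/21$ into \Cref{LB:general-graph-family} (equivalently part (ii) of \Cref{thm:dsphere}). Since $8/21<1$ the hypothesis is satisfied, giving $M(\Sthree_n)\geq an^{1-8/21}=an^{13/21}$ w.h.p., and in turn $\mu^{(3)}_n\geq cn^{13/21}$ for some $c>0$.

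For the concentration statement I would invoke part (iii) of \Cref{thm:dsphere}, which applies because $\beta_3\leq 8/21<1/2$; this yields the sub-Gaussian tail $\Pr(|M(\Sthree_n)-\mu^{(3)}_n|>t)\leq e^{-ct^2/n}$, so taking $t=K\sqrt n$ with $K$ a large constant makes the tail arbitrarily small and gives $|M(\Sthree_n)-\mu^{(3)}_n|=\Op(\sqrt n)$. The in-probability convergence $M(\Sthree_n)/\mu^{(3)}_n\to 1$ then follows by dividing: the error $\Op(\sqrt n)$ is of strictly smaller order than the denominator $\mu^{(3)}_n\geq cn^{13/21}$, because $\tfrac12<\tfrac{13}{21}$ (the ratio is $\Op(n^{-5/42})$). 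There is no genuine obstacle at this stage: the substantive work, namely establishing $\beta_3\leq 8/21$, is carried out separately in \cite{3-sphere-entropy} and is simply imported here, while \Cref{thm:dsphere} and \Cref{LB:general-graph-family} supply off-the-shelf the upper bound, the $\beta_d$-dependent lower bound, and the Talagrand-type concentration.
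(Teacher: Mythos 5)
Your proposal is correct and matches the paper's own derivation exactly: the paper deduces \cref{thm:3sphere} by feeding $\beta_3\leq 8/21$ (from \cref{entropy-bound}) into the lower bound of \cref{LB:general-graph-family} (equivalently \cref{thm:dsphere}(ii)), the upper bound of \cref{thm:dsphere}(i), and the concentration of \cref{thm:dsphere}(iii), with the same arithmetic $1-8/21=13/21$ and the same observation that $\sqrt{n}\ll n^{13/21}$ to obtain $M(\Sthree_n)/\mu^{(3)}_n\to 1$ in probability.
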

All lower bounds are proved using a first moment method. For this technique to work, we need good upper bounds on the number of $n$-vertex spheres with a given number of facets $m$. (We discuss this in \cref{section:howmany}.) For fixed $n$, while spheres with a large number of facets tend to have larger cost, there can be many more such spheres than spheres with few facets. Thus, depending on how quickly the number of spheres grow with $m$, the expected number of `cheap' spheres might be dominated by spheres with few or many facets.

\medskip
A well-known question of Gromov~\cite{gromov-spaces-questions} asks whether the number of isomorphism classes of spheres in $\mathcal{S}_n^{(d)}$ with $m$ facets grows at most exponentially fast as a function of $m$. If so, we would be able to prove much stronger results. Furthermore, if we require our sphere to lie in a suitable subfamily $\mathcal{P}\subseteq \mathcal{S}_n^{(d)}$ for which this holds, then our results apply to the minimum-cost sphere in this family; for such families, the minimum cost tends to be achieved by a sphere with close to the minimum number of facets. Here it is natural to restrict our attention to \textit{symmetrical} families, that is, those that are unions of isomorphism classes. We also require a specific sphere $S^*_{n,d}$ of a particularly simple form, used in proving our upper bound, to lie in the family. We define $S^*_{n,d}$ as follows: its facets are precisely those $(d+1)$-sets whose intersection with $\{v_1,\ldots,v_{n-d}\}$ lies in $\{\{v_1,v_2\},\ldots,\{v_{n-d-1},v_{n-d}\},\{v_{n-d},v_1\}\}$. We will subsequently show that this is indeed a sphere.

We prove matching upper and lower bounds of order $n^{1-\frac{1}{d}}$, and sharp concentration, for such $\mathcal{P}$ in any dimension $d$.

\begin{theorem}
\label{LC-sphere-scaling}
Assume $\mathcal{P}\subseteq \mathcal{S}_n^{(d)}$ is a symmetrical subfamily including $S^*_{n,d}$ and containing at most $K^m$ isomorphism classes of spheres with $m$ facets for some $K$ and every $m$.
Then there is a $\mu=\Theta(n^{1-\frac{1}{d}})$ such that for some constant $c>0$ and any $0<t\leq \sqrt{n}$,
\[
\Pr\big(|M(\mathcal{P})-\mu|>t\sqrt{n}\big) \leq e^{-ct^2},
\]
and the optimal spanning sphere has at most $(1+\eps)dn$ facets with high probability (for any $\eps>0$).
\end{theorem}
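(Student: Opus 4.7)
The plan is to combine four ingredients: an explicit upper-bound construction, a first-moment lower bound, an \emph{a priori} bound on the number of facets of the optimum (also by first moment), and Talagrand's inequality. The hypotheses play complementary roles: $S^*_{n,d}\in\mathcal{P}$ together with symmetry ensures that the upper-bound construction of \cref{thm:dsphere}(i)---which produces a cheap isomorphic copy of $S^*_{n,d}$---lies inside $\mathcal{P}$, giving $M(\mathcal{P})\leq bn^{1-1/d}$ w.h.p.\ for some $b=b(d)$; the $K^m$ growth drives both the matching lower bound and the control on the number of facets of the optimum.

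\emph{Lower bound and facet count via first moment.} By the Lower Bound Theorem for simplicial spheres, every spanning $d$-sphere on $n$ vertices has at least $m_{\min}=dn-O_d(1)$ facets. The number of labeled $m$-faceted spheres in $\mathcal{P}$ is at most $K^m\cdot n!$, and the probability that such a sphere has cost at most $t$ is at most $(et/m)^m$ (a Chernoff bound for a sum of $m$ independent $U[0,1]$ variables). Setting $m=\alpha n$ and $t=c\,n^{1-1/d}$, Stirling gives that the logarithm of the expected number of such cheap $m$-faceted spheres is
\[
(1-\alpha/d)\,n\log n \,+\, \alpha n\log(eKc/\alpha) \,-\, n \,+\, O(\log n).
\]
At $\alpha=d$ this equals $dn\log(eKc/d)+O(n)$, which is $-\Omega(n)$ once $c<de^{1/d-1}/K$; for $\alpha\geq(1+\eps)d$ the leading coefficient is at most $-\eps$, so the expression is super-exponentially small in $n\log n$. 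Using the sharp bound $\alpha\geq d-O(1/n)$ to control the lower endpoint (where $1-\alpha/d$ is only $O(1/n)$), summing over $m\in[m_{\min},\binom{n}{d+1}]$ yields both $\Pr(M(\mathcal{P})\leq c\,n^{1-1/d})=o(1)$ (the lower bound) and, taking instead $t=b\,n^{1-1/d}$ and $\alpha>(1+\eps)d$, the assertion that the optimum has at most $(1+\eps)dn$ facets w.h.p.

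\emph{Concentration via Talagrand.} Fix $r=(1+\eps)dn$ and let $M_r:=\min\{W_S:S\in\mathcal{P},\,|S|\leq r\}$. By the previous step, $M_r=M(\mathcal{P})$ except on an event of probability $e^{-\Omega(n\log n)}$. The random variable $-M_r$ (shifted to be non-negative) is $1$-Lipschitz in the edge-costs and $r$-certifiable: if $-M_r(W)\geq -s$ then some $S\in\mathcal{P}$ with $|S|\leq r$ attains $W_S\leq s$, and revealing only its $\leq r$ edge-costs is enough to maintain this. Talagrand's inequality therefore yields
\[
\Pr\bigl(|M_r-\mathrm{med}(M_r)|>u\sqrt{r}\bigr)\leq 4e^{-u^2/4}
\]
for every $u\geq 0$. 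Setting $\mu:=\mathrm{med}(M_r)$ (which equals $\mathrm{med}(M(\mathcal{P}))$ up to a negligible additive error, since the two variables agree w.h.p.), rescaling $u=t\sqrt{n/r}$, and folding in the $e^{-\Omega(n\log n)}$ coupling error (which is dominated by $e^{-ct^2}$ for $t\leq\sqrt n$) gives the stated concentration bound with $\mu=\Theta(n^{1-1/d})$.

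The most delicate point is the first-moment estimate: the coefficient $1-\alpha/d$ of the $n\log n$ term is only $O(1/n)$ at $\alpha=m_{\min}/n$, so one really needs the sharp value $m_{\min}=dn-O(1)$ from the Lower Bound Theorem to rule out a positive $\Theta(n\log n)$ contribution at the lower endpoint; any weaker face-count bound would break the argument. The Talagrand step is then routine, because the certificate size $r=(1+\eps)dn$ is linear in $n$.
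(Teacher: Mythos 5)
Your proof is correct and follows essentially the same strategy as the paper's: first‐moment control on the facet count and the lower bound, plus Talagrand's certifiability corollary applied to the minimum restricted to small spheres. The paper packages the first‐moment step differently — it splits $\mathcal{P}$ into $\mathcal{P}'$ (at most $(d+\eps)n$ facets) and $\mathcal{P}''$, then invokes its general first‐moment lemma (\cref{LB:general-graph-family}) on each piece, with $\beta=1/d$ and $\beta=1/(d+\eps)$ respectively, using the observation that $n!\leq m^{m/d}$ once $m\geq dn$, rather than carrying out the Stirling computation by hand; and its concentration step invokes \cref{thm:patchability-ineq} with the trivial ``put the removed hyperedges back'' patch, which, as the paper itself notes in a footnote, reduces exactly to the Talagrand certifiability corollary you apply directly. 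These are cosmetic differences; the decomposition, the role of the Lower Bound Theorem in pinning $m_{\min}=dn-O_d(1)$, the reduction to $M_r$ and the coupling error, and the Lipschitz/certifiability bookkeeping all match the paper's argument.
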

In particular, \cref{LC-sphere-scaling} holds when $\mathcal{P}$ is the subfamily of locally constructible spheres, the definition and motivation of which is discussed in \Cref{section:lc-spheres}. Furthermore, if Gromov's aforementioned question has a positive answer, then \cref{LC-sphere-scaling} holds for $\mathcal{P}=\mathcal{S}_n^{(d)}$.

\subsection{Outline of the paper}
In Section \ref{section:howmany} we discuss bounds on the number of spheres, which will be relevant to Theorems \ref{thm:dsphere} and \ref{thm:3sphere}. We then proceed to prove the upper and lower bounds constituting Theorem \ref{thm:2sphere} (i) in Section \ref{sec:bounds} and Theorem \ref{thm:dsphere} (i) and (ii) in Section \ref{section:d-bounds}. We prove Theorem \ref{thm:2sphere} (ii) in Section \ref{sec:2-conc} and Theorem \ref{thm:dsphere} (iii) in Section \ref{section:concentration}. Theorem \ref{thm:3sphere} follows from our other results as detailed above. We discuss locally constructible spheres, and prove Theorem \ref{LC-sphere-scaling}, in Section \ref{section:lc-spheres}.

\section{Definitions \& preliminaries} \label{section:defs}

\subsection{Hypergraphs and simplicial spheres}
A \emph{hypergraph} $G=(V,H)$ consists of a set $V$ of \emph{vertices}, and a set $H$ of pairwise distinct subsets of $V$, called \emph{(hyper)edges}. If each element of $H$ has the same number of vertices $k$, we say that $G$ is $k$-uniform. Let $K_n^{(d)}$ denote the hypergraph with vertex set $[n]$,  where each $(d + 1)$-tuple of vertices forms a hyperedge.

A \emph{simplicial $d$-sphere} is a triangulation of $\mathbb{S}^d$, i.e.\ a simplicial complex $S$ homeomorphic to $\mathbb{S}^d$. We will sometimes drop the term \emph{simplicial} for brevity. We can realise $S$ as a hypergraph, by using the same vertex set, and declaring the set of vertices of each facet of $S$ to be a hyperedge. All the simplicial $d$-spheres we will consider in this paper are realised as spanning sub-hypergraphs of $K_n^{(d)}$, i.e.\ hypergraphs on $[n]$ with set of hyperedges contained in that of $K_n^{(d)}$. We define a  \emph{simplicial $d$-ball} analogously.

\subsection{Sums of uniform random variables}

For $p>0$, we let $U(0,p)$ denote a random variable uniformly distributed in $[0,p]$.
We will be using the following straightforward bound on the sum of uniform random variables.
\begin{lemma}
\label{claim:uniforms-sum}
Let $X_1,\ldots, X_m$ be  i.i.d.\  $U(0,1)$ random variables. Then
\[\Pr\left(\sum_{i=1}^m X_i\leq L\right) \leq L^m/m!\leq (L e/m)^m.\]
\end{lemma}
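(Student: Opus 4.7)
The plan is to interpret the probability geometrically as a Lebesgue volume. Since each $X_i$ is uniform on $[0,1]$, the joint density of $(X_1,\ldots,X_m)$ is the indicator of the unit cube $[0,1]^m$, so
\[
\Pr\left(\sum_{i=1}^m X_i\leq L\right)=\mathrm{vol}\bigl(\{x\in[0,1]^m:\textstyle\sum_i x_i\leq L\}\bigr).
\]
Dropping the upper-bound constraints $x_i\leq 1$ can only enlarge this region, so the probability is at most the volume of the simplex $\Delta_L:=\{x\in\R_{\geq 0}^m:\sum_i x_i\leq L\}$. The volume of $\Delta_L$ is the classical quantity $L^m/m!$; this can be seen either by the change of variables $x_i=Ly_i$, which reduces the claim to $\mathrm{vol}(\Delta_1)=1/m!$, or by a one-line induction on $m$ using Fubini.

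For the second inequality, I would use the standard lower bound $m!\geq(m/e)^m$, which follows immediately from the Taylor series
\[
e^m=\sum_{k=0}^{\infty}\frac{m^k}{k!}\geq\frac{m^m}{m!}.
\]
Substituting gives $L^m/m!\leq L^m\cdot(e/m)^m=(Le/m)^m$, completing the proof.

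There is essentially no main obstacle here; the argument is a textbook computation. The only point worth being careful about is that the inequality from dropping the cube constraints is genuinely needed (since $\Delta_L$ may extend outside $[0,1]^m$ when $L>1$), but this only weakens the bound in the direction we want.
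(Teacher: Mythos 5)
Your proof is correct and takes essentially the same approach as the paper: both interpret the probability as the Lebesgue volume of the intersection of the simplex $\{\sum x_i \leq L, x_i \geq 0\}$ with the unit cube, bound it by the full simplex volume $L^m/m!$, and then invoke $m! \geq (m/e)^m$. The only cosmetic difference is that the paper phrases the relaxation as replacing the integration domain $[0,1]^m$ by $[0,L]^m$ (valid since the simplex is contained in $[0,L]^m$), whereas you phrase it as dropping the constraints $x_i\leq 1$; these are the same step.
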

\begin{scproof}
Let $\chi_A$ be the characteristic function 
of the set $A:={ \{x\in \mathbb{R}_+^m: \sum_{i=1}^{m}x_i \leq L\}}$. Then 
\[
\Pr\left(\sum_{i=1}^m X_i\leq L\right)=\int_{[0,1]^{m}} \chi_A d\mu \leq \int_{[0,L]^{m}} \chi_A d\mu ={L^m}/{m!},
\]
where $\mu$ is the Lebesgue measure on $\mathbb{R}^m$.
\end{scproof}

\section{On the number of simplicial spheres}
\label{section:howmany}
In \cref{section:lowerbound}, we give a generic lower bound on the random variable $M(\Sd_{n})$, using a first moment argument by finding an $L\in \R$ such that the expected number of $d$-spheres with cost at most $L$ is $o(1)$. However, for this to work we need a good enough upper bound on the total number $A^{(d)}_{n,m}$ of labelled simplicial $d$-spheres with a given number of vertices $n$ and facets $m$. Previous bounds often in fact consider spheres up to vertex permutations (often called combinatorially distinct spheres); we will refer to these as \emph{isomorphism classes} and denote their number by $B^{(d)}_{n,m}$. On the one hand, clearly $A^{(d)}_{n,m}\leq n!B^{(d)}_{n,m}$. But on the other hand, since each sphere has at most $(d+1)!m$ automorphisms -- an arbitrary facet can be mapped to any of $m$ facets in any of $(d+1)!$ orientations, and this determines the automorphism -- we also have that $n!\,B^{(d)}_{n,m}\leq (d+1)!\, m A^{(d)}_{n,m}$. Hence $A^{(d)}_{n,m}=m^{O(1)} n!\,B^{(d)}_{n,m}$.

For $d=2$ we have $m=2n-4$,\footnote{Indeed, Euler's formula implies that each simplicial $2$-sphere with $n$ vertices has exactly $m=2n-4$ faces and $3m/2=3n-6$ edges.} and so these numbers are merely functions of $n$. Tutte \cite{tutte1962census} determined the number of isomorphism classes, which is growing exponentially with $n$:
\begin{equation}
  B_{n,2n-4}^{(2)} =(1+o(1)) \frac{1}{16} \sqrt{\frac{3\pi}{2}}n^{-5/2} \left(\frac{4^4}{3^3}\right)^{n+1}=2^{O(n)}.  
  \label{tutte-enum}
\end{equation}
Thus, for sufficiently large $n$, 
\begin{equation}A_{n,2n-4}^{(2)}\leq n! B_{n,2n-4}^{(2)}\leq (4^4n/3^3 e)^n.\label{tutte-labelled}\end{equation} 
A first moment argument using \eqref{tutte-labelled} (see \cref{LB:general-graph-family}) provides a lower bound on $M(\Stwo_{n})$ within a constant factor of the upper bound we prove in \cref{section:upperbound}. That upper bound can be slightly improved (while still not matching the lower bound) as an easy corollary of a theorem on the related threshold problem by Luria \& Tessler~\cite{luria-tessler}. For details, see the proof of \cref{thm:2sphere}.

In higher dimensions, the number $m$ of facets varies among simplicial spheres with $n$ vertices. There is a trade-off where, for a fixed $n$, there are many more spheres with large $m$, but these tend to have higher cost and so are individually much less likely to have cost at most $L$. If the second factor dominates the former, the minimum-cost sphere typically has few facets. In order to understand this interplay we need bounds on the asymptotic growth rate of $A^{(d)}_{n,m}$ as $m$ becomes large. 

Our main interest, in view of \Cref{thm:dsphere}, is how small we can take $\beta_d$ such that $A^{(d)}_{n,m}\leq 2^{O(m)}m^{\beta_d m}$. However, previous work in this area has concentrated on the growth of the number of isomorphism classes of simplicial spheres with $m$ facets and any number of vertices. This is unknown for $d\geq 3$. In fact, while it is easy to show that there are $2^{\Omega(m)}$ spheres with $m$ facets, it is a major open question of Gromov~\cite{gromov-spaces-questions} whether their number grows at most exponentially fast. 

Upper bounds on the number of spheres with $n$ labelled vertices and any number of facets, of roughly $\exp({n^{\lceil d/2\rceil+o(1)}})$ have been obtained \cite{Kal88}, but these are insufficient for our purposes, as they give no control over the relationship between $n$ and $m$, and are dominated by triangulations with $n\ll m$. Previous bounds on the number of isomorphism classes of spheres with $m$ facets are of the form $m^{\gamma_d m}$ for some constants $\gamma_d$ \cite{howmanytriang:rivasseau}. Since this also gives an upper bound on the number of isomorphism classes with $m$ facets and exactly $n$ vertices, and labelling the vertices adds a multiplicative factor of at most $n!\ll m^{m/d}$, we obtain a bound of the desired form with $\beta_d=\gamma_d+1/d$. Unfortunately, the best known upper bounds on $\gamma_d$ are too large for our purposes when $d\geq 3$. For instance, it was shown in~\cite{howmanytriang:rivasseau} that $\gamma_3\leq 1/3$ and $\gamma_4 \leq 3/4$, and these bounds are low enough to imply that $M(\Sthree_{n})\gg 1$ and $M(\Sd[4]_n)=\Omega(1)$ (as we will show in \cref{section:lowerbound}). However, these are neither anywhere near our upper bound $n^{1-1/d}$ in \cref{section:upperbound}, nor large enough for our upper bound on the variance $\Var(M(\Sd_{n}))=O(n)$ in \cref{section:concentration} to imply sharp concentration. Moreover, for $d\geq 5$ the best known upper bounds have $\gamma_d>1$, which render our first moment method of \cref{section:lowerbound} ineffective.

In a forthcoming paper, the third author improves on the upper bound in the case $d=3$. This increases the lower bound on $M(\Sd_{n})$ enough for the argument in \cref{section:concentration} to imply sharp concentration, yet not quite enough to match the upper bound in \cref{section:upperbound}.
\begin{theorem}[\cite{3-sphere-entropy}]
\label{entropy-bound}
For sufficiently large $n$, we have $A^{(3)}_{n,m}\leq m^{\frac27(m+n)}$.
\end{theorem}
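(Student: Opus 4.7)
The plan is to prove the bound via an entropy-style encoding argument, exploiting the rigid combinatorial structure that the Dehn--Sommerville relations impose on a triangulated $3$-sphere. First I would record the numerology: in any simplicial $3$-sphere with $n$ vertices and $m$ tetrahedra, each triangle lies in exactly two tetrahedra, so $f_2=2m$, and $f_0-f_1+f_2-f_3=0$ forces $f_1=n+m$. So the target bound $m^{\frac{2}{7}(m+n)}$ can be read as an amortised description cost of $\tfrac{2}{7}\log m$ per edge of the triangulation, which is the natural bookkeeping unit for the argument. Since the bound concerns \emph{labelled} spheres rather than isomorphism classes, the $n!\le m^{O(m)}$ overhead from a global labelling is comfortably subsumed in any encoding of length $\Theta((n+m)\log m)$.

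Next I would set up the encoding itself. Given a labelled $3$-sphere $T$, the plan is to reconstruct $T$ from a short certificate built around the combinatorics of links: in a $3$-sphere the link of every edge is a cycle of tetrahedra (hence a cyclic sequence of vertices) and the link of every vertex is itself a $2$-sphere, which is controlled by Tutte's classical count \eqref{tutte-enum}. A natural certificate would list, for each of the $n+m$ edges, enough information about its link to recover the incident tetrahedra, exploiting $\sum_e \deg(e)=6m$ (each tetrahedron has six edges) together with a Jensen-type estimate on $\sum_e\log(\deg(e)!)$ to aggregate the cost across edges of varying degree. A direct execution of this strategy should already give an entropy bound of the form $m^{c(m+n)}$ for some explicit $c$, with the role of the $2$-sphere enumeration \eqref{tutte-enum} being to pay efficiently for each vertex link.

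The main obstacle will be pushing the constant down to exactly $\tfrac{2}{7}$, because the naive link-by-link encoding is wasteful in a controllable but nontrivial way: each tetrahedron appears in all six of its edge-links and each triangle in all three, so the same combinatorial data is paid for many times over. The heart of the proof is presumably a careful amortisation --- for instance, encoding edge-links and vertex-links simultaneously, or pruning redundancy along a spanning tree of the $4$-regular dual graph on the $m$ tetrahedra, so that only about a $\tfrac{2}{7}$-fraction of the apparent degrees of freedom actually need to be described. The constant $\tfrac{2}{7}$ would then emerge as the optimum of a small convex programme balancing the vertex-link and edge-link entropies under the structural constraints $f_1=n+m$, $f_2=2m$, and $\sum_e\deg(e)=6m$; matching this optimum exactly, rather than up to a multiplicative constant, is where the real work lies.
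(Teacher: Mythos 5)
This statement is not proved in the paper at all: it is \Cref{entropy-bound}, which the authors explicitly attribute to a forthcoming paper of the third author (\cite{3-sphere-entropy}) and invoke as a black box to obtain $\beta_3\le 8/21$ via the observation $n\le m/3+O(1)$. So there is no in-paper argument against which to compare your proposal; any review has to assess the proposal on its own terms.

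On its own terms, your proposal is a plausible reading of where the exponent comes from, but it is a plan rather than a proof, and the plan stops precisely at the step that carries all the difficulty. The preliminary numerology is correct and worth having explicitly: in a simplicial $3$-sphere with $n$ vertices and $m$ facets one has $f_2=2m$ and, since the Euler characteristic of $\mathbb S^3$ is $0$, $f_1=n+m$; and $\sum_e\deg(e)=6m$ because each tetrahedron has six edges. Reading $m^{\frac27(m+n)}$ as ``$\frac27\log m$ bits per edge'' is therefore the right normalisation, and the observation that the $n!$ relabelling overhead is absorbed (since $n\le m/3$, so $\log n!\le n\log m\ll (m+n)\log m$) is also correct. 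However, from there the proposal only gestures at the mechanism. You correctly identify that a naive ``encode every edge-link and every vertex-link'' scheme massively overcounts (each tetrahedron sits in six edge-links and four vertex-links), and that some amortisation or pruning along a spanning tree of the $4$-regular dual is needed --- but you do not actually construct the encoding, do not say which redundancies are discarded, and do not carry out the Jensen/entropy aggregation you allude to. The constant $\frac27$ is then asserted to ``emerge as the optimum of a small convex programme'' without exhibiting the programme, its constraints, or its solution. That optimisation, or whatever structural lemma replaces it, is the theorem; everything before it is bookkeeping. As written, the proposal would not convince a referee that $m^{\frac27(m+n)}$, as opposed to $m^{c(m+n)}$ for some unspecified $c<1$, is attainable, and it cannot be checked for a genuine gap because no concrete encoding is ever produced. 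If you want to turn this into a proof you need to (a) fix a concrete certificate (what exactly is recorded, in what order, from what data), (b) prove a decoding lemma showing the triangulation is recoverable from the certificate, and (c) bound the number of certificates, with the $\frac27$ falling out of an explicit computation rather than a heuristic balance.
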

Since in this case $n\leq m/3+O(1)$, it follows that $\beta_3\leq \frac27(1+1/3)=8/21$.

In \cref{section:lc-spheres} we discuss a subfamily of more combinatorially tractable spheres, called the \emph{locally constructible} (LC) spheres. All $2$-spheres are LC, but for every $d\geq 3$ there are $d$-spheres that are not. It has been shown~\cite{LC-spheres-definition} that there are only exponentially many LC-spheres with a given number of facets, which allows us to prove much stronger results for this class.

\section{Upper and lower bounds for 2-spheres} \label{sec:bounds}

In this section we provide simple  bounds that establish \cref{thm:2sphere}(i).
\begin{scproof}[Proof of \cref{thm:2sphere}(\textnormal{i})]
For the lower bound, fix $L\in \R$, and define the random variable $X$ to be the number of $S\in \Stwo_n$ whose cost $W_S$ is at most $L$.

By Tutte's formula \eqref{tutte-enum}  there are $o(\gamma^n n!)$ simplicial $2$-spheres on $n$ vertices, where $\gamma:=4^4/3^3$. Each simplicial $2$-sphere has $m=2n-4$ faces, hence by Lemma~\ref{claim:uniforms-sum} it has cost at most $L$ with probability at most $(L e/m)^m=n^{O(1)}(Le/2n)^{2n}$. By linearity of expectation we thus have 
\begin{align} \label{EX}
    \E (X) \leq \gamma^n n! n^{O(1)} (Le/2n)^{2n}
=n^{O(1)}(L^2e\gamma /4n)^n.
\end{align}

Let $L:= (1-\eps)\sqrt{4n/e\gamma}$ for some $\eps>0$, so that the second factor is $(1-\eps)^{2n}$. It follows from the Markov inequality that 
$M(\Stwo_n)>  L$, and hence with high probability \[M(\Stwo_n)> (1-\eps)\sqrt{\frac{3^3n}{4^4e}}.\]

\medskip
For the upper bound, we use the following result from~\cite{luria-tessler}: there is a sharp threshold for the appearance of a spanning $2$-sphere in the Linial--Meshulam model\footnote{This is the natural generalisation of the binomial random graph to higher dimensions: given some paramenter $p\in [0,1]$, and a dimension $d\in \mathbb{N}$, we keep each $d$-simplex with vertices in $[n]$ independently with probability $p$.} at $p_c:=\sqrt{e/\gamma n}$. Letting $p:=(1+\eps)p_c$, we conclude that $M(\Stwo_n)\leq mp$, because one way to sample the Linial--Meshulam model is to assign i.i.d.\ $U(0,1)$ random costs to the 2-simplices, and delete those of cost greater than $p$. We can, however, improve this bound by a factor of $2$ as follows.

Let $R\subseteq \Stwo_n$ be the random set of $2$-spheres with no face having cost more than $p$.
For any $S\in \Stwo_n$, conditionally on $\{S\in R\}$, the conditional distribution of the costs of the faces in $S$ are i.i.d.\ $U(0,p)$. Thus the conditional distribution $(W_S\mid S\in R)$ is the sum of $m$ i.i.d.\ random variables with expected value $p/2$, and hence Hoeffding's inequality yields $\Pr(W_S>(1+\eps)mp/2\mid S\in R)\leq e^{-\eps^2m/2}$.

Thus whenever $R$ is non-empty, an arbitrary $S\in R$ has cost at most $(1+\eps)mp/2$ w.h.p., and $R$ is non-empty w.h.p.\ since $p>(1+\eps)p_c$. Hence $M(\Stwo_n) $ is at most $ (1+\eps)mp/2\leq (1+\eps)^2\sqrt{ne/\gamma}$ w.h.p. Since $M(\Stwo_n) \leq m$ with the remaining probability, the result follows.
\end{scproof}

\medskip
Next, we obtain a result analogous  to \cref{thm:2sphere}(i) for the variant of our model where we put costs on the $1$-cells (edges) instead of $2$-cells:  we define $N(\Stwo_n)$ just like $M(\Stwo_n)$ 
except that we put independent $U(0,1)$ costs on the pairs of vertices of $K_n^{(d)}$, and define the cost of $S\in \Stwo_n$ to be the sum of the costs of the pairs appearing in the edges (i.e.\ triples of vertices) of $S$. 

\begin{theorem} \label{cor}
Let $\nu^{(2)}_n$ denote the median of $N(\Stwo_n)$. There is a constant $C\in \R$ such that 
    \begin{align*}
    \frac{3^2}{4^{4/3}e^{2/3}} -o(1)\leq \frac{\nu^{(2)}_n}{{n^{2/3}}}\leq C.
\end{align*}    
Furthermore, $N(\Stwo_n)$ is sharply concentrated around $\nu^{(2)}_n$: for some small fixed $\delta>0$,
\[\Pr(|N(\Stwo_n)-\nu^{(2)}_n|\geq (\nu^{(2)}_n)^{1-\delta})\leq \exp(-n^\delta).\]
\end{theorem}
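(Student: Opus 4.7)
The plan is to mirror the structure of the proof of \cref{thm:2sphere}: a first-moment lower bound, a Luria--Tessler-style constructive upper bound, and a Talagrand-based concentration inequality.

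For the lower bound, I would carry out a first-moment argument essentially identical to the one proving \cref{thm:2sphere}(i), replacing the $2n-4$ facets of $S$ with the $3n-6$ edges of its $1$-skeleton (by Euler's formula). Since these $3n-6$ edge costs are i.i.d.\ $U(0,1)$, \cref{claim:uniforms-sum} gives $\Pr(N(S)\le L)\le\bigl(Le/(3n-6)\bigr)^{3n-6}$ for each fixed $S\in\Stwo_n$. Combined with Tutte's count $A^{(2)}_{n,2n-4}\le(\gamma n/e)^n$ from \eqref{tutte-labelled} (with $\gamma=4^4/3^3$), the expected number of spheres of cost at most $L$ is bounded by
\[
n^{O(1)}\Bigl(\frac{\gamma L^3 e^2}{27\,n^2}\Bigr)^{n},
\]
which is $o(1)$ whenever $L<\frac{3^2}{4^{4/3}e^{2/3}}\,n^{2/3}$, and Markov then delivers the stated lower bound.

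For the upper bound, I would mimic the constructive argument in the proof of \cref{thm:2sphere}(i), but thresholding on $1$-simplices rather than $2$-simplices. Let $q=(1+\eps)(e/\gamma n)^{1/3}$ and let $R\subseteq\Stwo_n$ be the subfamily whose $1$-skeleton uses only edges of cost at most $q$; equivalently, the spheres contained in the random graph $G_q$ formed by the edges with $W_e\le q$, which is distributed as $G(n,q)$. Conditional on $S\in R$, the $3n-6$ edge costs of $S$ are i.i.d.\ $U(0,q)$, so Hoeffding yields $N(S)\le(1+\eps)(3n-6)q/2=O(n^{2/3})$ w.h.p.\ for any fixed $S\in R$. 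What remains is to show $R\ne\varnothing$ w.h.p., that is, that $G(n,q)$ contains a spanning $2$-sphere once $q$ exceeds the first-moment threshold $q_c=(e/\gamma n)^{1/3}$. This is the edge-analogue of the Luria--Tessler theorem of \cite{luria-tessler}, and the natural route is a second-moment computation over pairs $(S_1,S_2)\in(\Stwo_n)^2$ stratified by the edge-overlap $|E(S_1)\cap E(S_2)|$. This is the main technical obstacle; granting it, the remainder of the upper bound transcribes the corresponding step of the proof of \cref{thm:2sphere}(i) verbatim.

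For the concentration, I would apply Talagrand's inequality directly. $N(\Stwo_n)$ is a $1$-Lipschitz function of the $\binom{n}{2}$ independent edge costs and is $(3n-6)$-certifiable, since any witness $S\in\Stwo_n$ involves only $3n-6$ coordinates. Talagrand's inequality therefore gives
\[
\Pr\bigl(|N(\Stwo_n)-\nu^{(2)}_n|\ge t\bigr)\le 4\exp\bigl(-c\,t^2/n\bigr)
\]
for some absolute $c>0$. Plugging in $t=(\nu^{(2)}_n)^{1-\delta}=\Theta(n^{(2/3)(1-\delta)})$ turns the exponent into order $n^{1/3-4\delta/3}$, which dominates $n^\delta$ for every $\delta\le 1/7$, so any sufficiently small fixed $\delta>0$ will do. Unlike in \cref{thm:2sphere}(ii), the sharper patch inequality of \cite{patch-ineq} is not needed here, since the target deviation $n^{(2/3)(1-\delta)}$ is polynomially larger than the $\sqrt n$ scale on which Talagrand becomes tight.
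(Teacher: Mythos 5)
Your lower bound and concentration arguments essentially coincide with the paper's. The first-moment computation with $3n-6$ edges in place of $2n-4$ faces is exactly what the paper does, and the constant $3^2/(4^{4/3}e^{2/3})$ checks out. For concentration, the paper routes through \cref{thm:conc-general}, but (as its own footnote observes) in this setting that theorem reduces to precisely the certifiability corollary of Talagrand that you invoke directly; the fact that every $2$-sphere has exactly $3n-6$ edges makes the split into small and large families in \cref{thm:conc-general} vacuous, so the two concentration arguments are morally the same.

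The upper bound, however, has a genuine gap, which you flag yourself. You reduce it to showing that the random graph $G(n,q)$ with $q$ slightly above $(e/\gamma n)^{1/3}$ w.h.p.\ contains a spanning $2$-sphere in its $1$-skeleton, and propose a second-moment argument over pairs of spheres stratified by edge-overlap. This edge-analogue of the Luria--Tessler theorem is not in the literature, and there is no reason to expect it to be any easier than the face version, which the paper's introduction characterises as ``a delicate second-moment argument''; moreover the edge setting introduces nontrivial dependencies between faces sharing an edge that the face setting does not have. Identifying an obstacle does not discharge it, so as written the upper bound is incomplete. The paper avoids this entirely with an explicit construction: start from a tetrahedron $v_1v_2v_3v_4$, iteratively perform a barycentric subdivision of the most recent face $v_{i-3}v_{i-2}v_{i-1}$ with a new vertex $v_i$, restrict the choice of $v_i$ to a residue class $V_{i\bmod 4}$ so that the greedily minimised increments $Z_i$ are genuinely independent (any potential edge is examined at most once across the whole process), and then bound $\sum_i Z_i$ by partitioning the indices into $\Theta(\sqrt{n})$ blocks and applying \cref{claim:Chernoff-sum-min-sum} to each. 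This is self-contained and yields $N(\Stwo_n)\le 60\,n^{2/3}$ w.h.p.\ — a worse constant than your approach would give if it worked, but the theorem only asserts $\nu^{(2)}_n\le Cn^{2/3}$ for some $C$, so that suffices. If you wish to keep your threshold-based route you would first need to prove the edge-threshold result, which would be a substantial independent contribution; otherwise, the constructive argument is the way to close the gap.
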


\begin{scproof}
We follow the lines of the proof of \cref{thm:2sphere}(i). Each simplicial $2$-sphere with $n$ vertices has exactly $3n-6$ edges, thus again by Lemma~\ref{claim:uniforms-sum} it has cost at most $L$ with probability at most $n^{O(1)}(Le/3n)^{3n}$. The calculation of \eqref{EX} now becomes
\[\E (X) \leq n^{O(1)} \gamma^n n!  (Le/3n)^{3n}
\approx \left(\frac{\gamma n}{e} \frac{L^3 e^3 }{3^3 n^3}\right)^{n} = \left(\frac{\gamma L^3 e^2 }{3^3 n^2}\right)^{n} = \left(\frac{4^4 L^3 e^2 }{3^6 n^2}\right)^{n}.\]

Choosing $L$ so that this equals $(1-\eps)^n$, it follows as above that, with high probability,  
$N(\Stwo_n)> (1-\eps)(\frac{3^2}{4^{4/3}}) (n/e)^{2/3}$.

For the upper bound, we use an explicit construction.
The proof idea is to start with a $2$-sphere on $4$ vertices (the boundary of a tetrahedron) and $n-4$ free vertices. We repeatedly make barycentric subdivisions of its faces until the sphere has $n$ vertices.

A heuristic argument for the growth rate $N(\Stwo_n)\sim n^{2/3}$ is as follows. When there are $i$ vertices in the construction so far, we must choose one of the remaining $n-i$ free vertices to use for the barycentric subdivision. The cost of picking any given vertex is a sum of three i.i.d.\ $U(0,1)$ random variables. The expected minimum over $n-i$ such random variables is of order $(n-i)^{-1/3}$, and summing over $i<n$ the total expected cost is of order $n^{2/3}$. We want to bound the upper tail of this sum.

However, in order to make this heuristic rigorous we need to avoid problems of dependencies between the costs of the edges added in the current barycentric subdivision and the previous ones. For this we use two tricks: First, similarly to the proof of \cref{thm:2sphere}(ii), we partition the vertex set $[n]$ into four sets $\{V_i\}_{i=1}^4$. For the sake of convenience, we assume that a vertex $j\in [n]$ lies in $V_i$ if and only if $j\equiv i \mod 4$. Next, we choose the faces to perform barycentric subdivisions of in a particular way. Starting with a tetrahedron $v_1v_2v_3v_4:=1234$, make a barycentric subdivision of the face $v_2v_3v_4$ using some vertex $v_5$, then a barycentric subdivision of the face $v_3v_4v_5$ using some vertex $v_6$, and so on. This $2$-sphere can be encoded by the word $v_1v_2\ldots v_n$, and the set of edges in its $1$-skeleton are $\{v_iv_j:|i-j|\leq 3\}$.
(In hypergraph terms, it corresponds to the shadow of a tight path in a $4$-uniform hypergraph.) Furthermore, we pick each such $v_i$ from the $V_j$ which has $i\equiv j \mod 4$. (That is, $v_5\in V_1$, $v_6\in V_2$ and so on.)

For $i\geq 5$, let $Z_{i,u}:=X_{v_{i-3}u}+X_{v_{i-2}u}+X_{v_{i-1}u}$ denote the cost of the edges added when using the vertex $u$ for a barycentric subdivision of $v_{i-3}v_{i-2}v_{i-1}$, and $Z_i:=\min_{u\in V_i}Z_{i,u}$ (where the indices for $V$ are taken modulo $4$). For $i\leq 4$, define $Z_1=0$, $Z_2=X_{12}$, $Z_3=X_{13}+X_{23}$ and $Z_4=X_{14}+X_{24}+X_{34}$, so that $\sum_{i=1}^4 Z_i\leq 6$ is the total cost of the edges in the tetrahedron $1234$.
Note that  $v_1v_2\ldots v_n$ encodes a $2$-sphere with cost $\sum_{i=1}^n Z_i\geq N(\Stwo_n)$. 
\begin{claim}
The random variables $Z_i$ are independent.
\end{claim}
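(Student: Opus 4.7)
The plan is to identify, for each $i\in[n]$, a (possibly random) set $F_i$ of edges such that $Z_i$ is a deterministic function of the costs $(X_e)_{e\in F_i}$, and then show that the sets $F_i$ are pairwise disjoint. For $i\leq 4$ I take $F_i$ to consist of the edges in the starting tetrahedron joining $v_i$ to lower-indexed vertices. For $i\geq 5$ I set
\[F_i:=\bigl\{\{v_{i-k},u\}:k\in\{1,2,3\},\,u\in V_{i\bmod 4}\setminus\{v_1,\dots,v_{i-1}\}\bigr\},\]
i.e.\ the edges whose costs enter the minimisation that defines $Z_i$ (the restriction to unchosen $u$ is built in, since $u$ must serve as the new apex of a barycentric subdivision). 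Crucially, because $v_j\in V_{j\bmod 4}$ for every $j$, the number of admissible $u$ at step $i$ is a deterministic function of $i$ and $n$.

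The main technical step is to show $F_i\cap F_j=\emptyset$ for $i<j$. If $i\leq 4<j$, every edge in $F_i$ has both endpoints in $\{v_1,\dots,v_4\}$, whereas the ``free'' endpoint of any edge in $F_j$ avoids $\{v_1,\dots,v_{j-1}\}\supseteq\{v_1,\dots,v_4\}$; the subcase $i<j\leq 4$ is trivial since the largest-indexed endpoint of an edge in $F_i$ is $v_i$. For $5\leq i<j$, suppose $e=\{v_{i-k},u\}=\{v_{j-k'},u'\}$. If $v_{i-k}=v_{j-k'}$ then $j-i=k'-k\in\{1,2\}$ while $u=u'\in V_{i\bmod 4}\cap V_{j\bmod 4}$ forces $i\equiv j\pmod 4$, a contradiction. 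Otherwise $u=v_{j-k'}$ and $v_{i-k}=u'$: the requirement $u\notin\{v_1,\dots,v_{i-1}\}$ forces $j-k'\geq i$, yet $u'=v_{i-k}$ has index $i-k\leq i-1<j$, so $u'\in\{v_1,\dots,v_{j-1}\}$, again a contradiction.

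With disjointness in hand, independence follows by a filtration argument. Let $\mathcal{F}_i:=\sigma(X_e:e\in F_1\cup\cdots\cup F_i)$. Each of $v_1,\dots,v_i$ is $\mathcal{F}_i$-measurable, so $F_{i+1}$ is $\mathcal{F}_i$-measurable. Since $F_{i+1}$ is disjoint from $F_1\cup\cdots\cup F_i$, the costs $(X_e)_{e\in F_{i+1}}$ are, conditionally on $\mathcal{F}_i$, i.i.d.\ $U(0,1)$. Therefore the conditional distribution of $Z_{i+1}$ given $\mathcal{F}_i$ depends only on the deterministic cardinality $|V_{(i+1)\bmod 4}\setminus\{v_1,\dots,v_i\}|$ and not on $\mathcal{F}_i$ itself, so $Z_{i+1}$ is independent of $(Z_1,\dots,Z_i)$. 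Induction on $i$ gives mutual independence.

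I expect the disjointness step to be the main obstacle: ruling out ``new--new'' endpoint coincidences between $F_i$ and $F_j$ requires the mod-$4$ colouring of the four classes, while ruling out ``new--fixed'' coincidences relies crucially on the fact that the admissible apex $u$ in $F_i$ must avoid \emph{all} previously chosen vertices $v_1,\dots,v_{i-1}$, not merely those lying in $V_{i\bmod 4}$.
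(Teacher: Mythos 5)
Your proof is correct and follows essentially the same approach as the paper: both arguments show that the edge sets whose costs determine distinct $Z_i$'s are pairwise disjoint, relying on the mod-$4$ vertex colouring. You are more explicit in two places the paper treats informally, namely the case analysis for disjointness (the paper instead argues that each fixed edge cost is queried at most once, which is equivalent) and the filtration argument, together with the observation that the number of candidate apices at each step is deterministic, needed to pass from disjointness of the random query sets to mutual independence.
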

\begin{proof}[Proof of claim]
To prove the claim, consider an edge $ww'\in \binom{[n]}{2}$. How many times is its cost $X_{ww'}$ explored?
Assume without loss of generality that $w$ is added to the growing sphere before $w'$, and $w=u_i$ for some $i$. Let $k\in \{0,1,2,3\}$ be such that $w\in V_i,w'\in V_{i+k}$ (where indices are again taken modulo $4$). If $k=0$, the edge $ww'$ is never considered -- the final $2$-sphere contain no edge with both endpoints in the same set $V_i$. If $k=1,2$ or $3$, the edge $ww'$ is explored only when choosing between candidates for $v_{i+k}:=\arg \min_{u\in V_{i+k}}Z_{i+k,u}$. So in either case, $ww'$ is only explored at most once, and hence the $Z_i$'s are independent.
\end{proof}

It only remains to show that $\sum_{i=1}^n Z_i\leq Cn^{2/3}$ whp for some constant $C$. Note that the random variables $Z_i$ are not identically distributed, but that the distributions of $Z_i$ and $Z_j$ only differ slightly if $i$ and $j$ are close.
We'll therefore group the terms into at most $k$ blocks with $k$ terms each (picking $k$ as the smallest multiple of $4$ such that $k^2\geq n$), and separately upper bound the sum of the $Z_i$'s in each block.
More precisely, we partition $\{1,2,\ldots ,n\}$ into sets $\{A_j\}_{j=0}^k$ by setting $A_j:=\{s\in \N:jk\leq n-s< {(j+1)}k\}$. Note that $|A_j|\leq k$ for all $j$ (in fact $|A_j|=k$ for all but a bounded number of $j$).
Let $Y_j:=\sum_{i\in A_j}Z_i$, so that $\sum_{i=1}^n Z_i=\sum_{j=0}^k Y_j$. 

For $i\in A_j$, $j\geq 1$, each random variable $Z_i$ is the minimum of at least $jk/4$ independent random variables $Z_{i,u}$.
Applying \cref{claim:Chernoff-sum-min-sum} to $Y_j$ with $k_0=|A_j|\leq k$, $n_0=jk/4$ and $b=25k$, we get that
\begin{equation}
\label{ineq:blockprob}
\Pr\Big( Y_k\geq \underbrace{25k (jk/4)^{-1/3}}_{=:a_j}\Big)\leq e^{-k}.
\end{equation}
Noting that $\sum_{j=1}^{k}j^{-1/3}< (3/2+o(1))k^{2/3}$ by an integral comparison and recalling that $k=(1+o(1)) \sqrt{n}$, the sum $\sum_{j=1}^{k}a_j $ is at most $ctn^{2/3}$ for some constant $c<60$.
Next, for $j=0$, note that $Y_0$ is the sum of the $Z_i$'s in the \emph{last} block. Hence $Y_0\leq |A_0|=k$, because $Z_i\leq 1$ surely. In other words, the inequality (\ref{ineq:blockprob}) holds for $k=0$ with $a_0:=k$.
Then $\sum_{j=0}^{k}a_j< 60n^{2/3}$ for $n$ sufficiently large. By a union bound,
\[
\Pr\left(\sum_{i=1}^{n}Z_i \geq 60n^{2/3}\right)\leq ke^{-k}= e^{-\Omega(\sqrt{n})}.
\]
Recall now that $\sum_{i=1}^{n}Z_i$ was the cost of the sphere found with our explicit construction, which is an upper bound on the optimal cost. So with high probability, $N(\Stwo_n)\leq 60n^{2/3}$.

\medskip
For concentration, we apply \cref{thm:conc-general}. There are at most $K^n n^n$ $2$-spheres on $n$ vertices, for some constant $K$, and each has $3d-6$ edges. So \cref{thm:conc-general} gives, with  $\beta=1/3$ and $t=3$, that there exists a small constant $\delta>0$ such that
\[\Pr(|N(\Stwo_n)-\nu|\geq \nu^{1-\delta})\leq \exp(-n^\delta).\qedhere\]
\end{scproof}

\section{Bounds for \texorpdfstring{$d$}{d}-spheres}
\label{section:d-bounds}
\subsection{Upper bound}
\label{section:upperbound}

In this section we show that there exists a sufficiently cheap sphere with high probability; in order to fulfil the requirements of \cref{LC-sphere-scaling} we will show this is true even when restricted to spheres isomorphic to $S^*_{n,d}$. Recall that the facets of $S^*_{n,d}$ are the $(d+1)$-sets whose intersection with $\{v_1,\ldots,v_{n-d}\}$ lies in $\{\{v_1,v_2\},\ldots,\{v_{n-d-1},v_{n-d}\},\{v_{n-d},v_1\}\}$. For $d=2$, this is easy to visualize, being the boundary of a bipyramid. In general, it consists of an $n-d$ cycle and $d$ poles; in fact, we consider the subclass $\Theta_d$ of isomorphic spheres where the cycle vertices may be permuted, but the poles remain $v_1,\ldots, v_d$. That is, we take an arbitrary cycle $C$ on vertex set $v_{d+1},\ldots,v_n$. For each such cycle $C$, define $f_{xy}=\{v_1,\ldots,v_d,x,y\}$ for each $xy\in E(C)$, let $B(C)$ be the closure of $\{f_{xy}\}_{xy\in E(C)}$, and let $S_C:=\partial B(C)$. Let $\Theta_d$ be the family of all such $S_C$.
\begin{proposition}
\label{its-a-sphere}
$S_C$ is a locally constructible simplicial $d$-sphere with $d(n-d)$ facets, given by
\[
\{f_{xy}-\{z_i\}: xy\in E(C), 1\leq i \leq d\}.
\]
In particular, taking $C=v_{d+1}v_{d+2}\cdots v_nv_{d+1}$, we have $S_C=S^*_{n,d}$ is an LC $d$-sphere.
\end{proposition}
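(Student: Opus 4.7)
The plan is to realise $S_C$ as the boundary of a shellable $(d{+}1)$-ball, namely $B(C)$ itself, from which all three claims --- that $S_C$ is a simplicial $d$-sphere, the explicit facet description, and local constructibility --- follow at once.

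First I would identify $\partial B(C)$ by a direct incidence count. The top-dimensional simplices of $B(C)$ are the $(d{+}1)$-simplices $f_{xy}$, so a $d$-face of $B(C)$ lies in $\partial B(C)$ iff it is contained in exactly one such $f_{xy}$. Every $d$-face of $B(C)$ has the form $f_{xy}\setminus\{w\}$. If $w=v_i$ is a pole, then $f_{xy}\setminus\{v_i\}$ contains both $x$ and $y$; since $xy$ is the unique edge of $C$ whose endpoints are $x$ and $y$, this $d$-face lies in no other $f_{x'y'}$, so it belongs to $\partial B(C)$. If instead $w$ is a cycle vertex, say $w=x$, then $f_{xy}\setminus\{x\}=\{v_1,\ldots,v_d,y\}$ is also a face of $f_{x'y}$ where $x'$ is the other $C$-neighbour of $y$, so this face is shared between two facets of $B(C)$ and is interior. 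This yields exactly the stated facet list, and counting gives $d\cdot|E(C)|=d(n-d)$ facets.

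Next I would shell $B(C)$ by traversing $C$. Writing the cyclic vertex sequence of $C$ as $u_1,\ldots,u_{n-d}$ with $e_i:=u_iu_{i+1}$ (indices modulo $n-d$), I take the order $f_{e_1},f_{e_2},\ldots,f_{e_{n-d}}$. For $2\leq i<n-d$, the only cycle vertex of $f_{e_i}$ that appears in an earlier $e_j$ is $u_i$, so $f_{e_i}\cap\bigcup_{j<i}f_{e_j}$ is the closure of the single $d$-face $\{v_1,\ldots,v_d,u_i\}$. For $i=n-d$ both $u_{n-d}$ and $u_1$ appear earlier, but no earlier $e_j$ contains both, so the intersection is the union of the two $d$-faces $\{v_1,\ldots,v_d,u_{n-d}\}$ and $\{v_1,\ldots,v_d,u_1\}$, meeting along the $(d{-}1)$-face $\{v_1,\ldots,v_d\}$. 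In each case this is a pure $d$-dimensional subcomplex of $\partial f_{e_i}$, and at the final step only $2$ of the $d+2$ facets of $\partial f_{e_{n-d}}$ are covered, so (since $d\geq 1$) the ordering is a genuine shelling of $B(C)$ as a ball rather than as a sphere. By standard results (Danaraj--Klee for the first implication, Benedetti--Ziegler for the second), the boundary of a shellable $(d{+}1)$-ball is a shellable $d$-sphere, and every shellable sphere is locally constructible. Thus $S_C$ is an LC $d$-sphere with the claimed facet description, and taking $C=v_{d+1}v_{d+2}\cdots v_nv_{d+1}$ recovers $S^*_{n,d}$.

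The main obstacle is the shelling verification at the closing step $i=n-d$, where one must check both that the intersection is pure $d$-dimensional and that it is a proper subcomplex of $\partial f_{e_{n-d}}$; the other steps are routine. A secondary concern is that the paper's working definition of LC (given later in \cref{section:lc-spheres}) may not be phrased as ``boundary of a shellable ball''; in that case one can instead give a direct construction by deleting a single edge from the dual cycle of $B(C)$ to obtain a tree of $(d{+}1)$-simplices glued along $d$-faces, and then performing the single $d$-face identification corresponding to the deleted dual edge, which realises $S_C$ as an LC sphere in the sense of Durhuus--Jonsson.
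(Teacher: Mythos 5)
Your proof is correct, but it takes a genuinely different route to local constructibility than the paper. Your facet computation is essentially identical to the paper's: you classify a $d$-face $f_{xy}\setminus\{w\}$ as boundary or interior according to whether $w$ is a pole or a cycle vertex, which is the same dual-incidence count the paper performs. Where you diverge is in establishing that $S_C$ is an LC $d$-sphere. The paper exploits the cone structure directly: it observes that $B(C)=\mathrm{Cone}^d(C)=\mathrm{Cone}_{v_1}\cdots\mathrm{Cone}_{v_d}(C)$, notes that $\mathrm{Cone}(C)$ is an LC disc, and then uses the fact that the cone of an LC $k$-ball is an LC $(k+1)$-ball (because each local move in the base construction lifts to a local move on the cone), so $B(C)$ is an LC $(d+1)$-ball and $S_C=\partial B(C)$ is an LC $d$-sphere. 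You instead exhibit an explicit shelling of $B(C)$ by walking around $C$, then invoke Danaraj--Klee (boundary of a shellable ball is a shellable sphere) and the implication shellable $\Rightarrow$ LC. Both arguments are sound; the paper's is more self-contained relative to its own \cref{def:lc-phere} since it needs only the one easy lemma that coning preserves LC, whereas yours imports two external results but in exchange gives the stronger information that $S_C$ is actually shellable, not merely LC. Your fallback construction --- delete one edge of the dual cycle of $B(C)$ to get a tree of $(d{+}1)$-simplices, then perform the one remaining $d$-face identification --- is in fact closer in spirit to the Durhuus--J\'onsson definition the paper uses and would also suffice.
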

Since it requires the definition of locally constructible, we defer the proof of \cref{its-a-sphere} to \cref{section:lc-spheres}.

\begin{proposition}
\label{UB:polar-sphere}
For $d\geq 2$, there exist $c_d>0$ such that there exists an $S\in\Sd_n$ with cost at most $c_d n^{1-\frac{1}{d}}$, and $d(n-d)$ facets, with high probability.\footnote{For $d=2$ \cref{sec:bounds} provides a better bound that relies on \cite{luria-tessler}.}
\end{proposition}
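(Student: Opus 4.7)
The plan is to restrict to the subfamily $\Theta_d$ of spheres $S_C$ indexed by Hamilton cycles $C$ on $V':=\{v_{d+1},\ldots,v_n\}$, and to exhibit a good $C$ by a greedy nearest-neighbour construction. For each unordered pair $xy\in\binom{V'}{2}$ set $Z_{xy}:=\sum_{i=1}^d W_{f_{xy}\setminus\{v_i\}}$. By \cref{its-a-sphere}, $S_C$ has exactly $d(n-d)$ facets and cost $W_{S_C}=\sum_{xy\in E(C)}Z_{xy}$. Since each facet $f_{xy}\setminus\{v_i\}$ contains the two cycle-vertices $x,y$ together with $d-1$ of the poles, any two distinct pairs in $\binom{V'}{2}$ produce disjoint sets of facets; hence $\{Z_{xy}\}_{xy\in\binom{V'}{2}}$ is an independent family, each $Z_{xy}$ being an Irwin--Hall sum of $d$ i.i.d.\ $U(0,1)$'s with $\Pr(Z_{xy}\le t)=t^d/d!$ for $t\in[0,1]$. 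So it suffices to produce a Hamilton cycle $C$ on $V'$ with $\sum_{xy\in E(C)}Z_{xy}=O(n^{1-1/d})$ w.h.p.

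I build $C$ greedily: set $u_1:=v_{d+1}$; iteratively, for $i\ge 2$, let $u_i$ be the unvisited vertex minimising $Z_{u_{i-1},\cdot}$ and write $Y_i:=Z_{u_{i-1},u_i}$; finally close the cycle with the edge $u_{n-d}u_1$, which contributes at most $d$. At step $i$ the candidate pairs $\{u_{i-1},w\}$ over unvisited $w$ have not appeared before (previous steps examined pairs of the form $\{u_{j-1},w'\}$ for $j<i$, and $u_{j-1}\ne u_{i-1}$ is already visited, so no overlap is possible). Thus, conditional on the history, $Y_i$ is the minimum of $k_i:=n-d-i+1$ independent Irwin--Hall variables. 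Integrating $\Pr(Y_i>t)=(1-t^d/d!)^{k_i}$ gives $\E[Y_i]=O(k_i^{-1/d})$, whence $\sum_i\E[Y_i]=O(n^{1-1/d})$ by an integral comparison.

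For concentration I adapt the block strategy from the proof of \cref{cor}. Set $k^*:=\lceil n^{1/d}\rceil$ and partition the step indices into $O(n^{1-1/d})$ blocks $A_j:=\{i:jk^*\le k_i<(j+1)k^*\}$, each of size $k^*$. For $j\ge 1$, each $Y_i$ with $i\in A_j$ is a minimum of at least $jk^*$ independent Irwin--Hall variables, so a Chernoff-type bound in the spirit of \cref{claim:Chernoff-sum-min-sum} yields a constant $C=C(d)$ for which
\[
\Pr\!\left(\sum_{i\in A_j}Y_i\ge C k^*(jk^*)^{-1/d}\right)\le e^{-k^*}.
\]
The boundary block $j=0$ contains only $O(k^*)$ steps and is bounded trivially by $Y_i\le d$, contributing $O(n^{1/d})\le O(n^{1-1/d})$. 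Summing the per-block targets for $j\ge 1$ gives
\[
C(k^*)^{1-1/d}\sum_{j=1}^{O(n^{1-1/d})}j^{-1/d}=O(n^{1-1/d}),
\]
using $\sum_{j=1}^{M}j^{-1/d}=O(M^{1-1/d})$ for $d\ge 2$; a union bound over the $O(n^{1-1/d})$ blocks, each failing with probability $e^{-\Omega(n^{1/d})}$, delivers $W_{S_C}=O(n^{1-1/d})$ w.h.p., as required.

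The main technical delicacy is porting the Chernoff-type estimate of \cref{claim:Chernoff-sum-min-sum} from the setting of \cref{cor} (where the elementary step cost is a sum of $3$ uniforms) to the present one (a sum of $d$ uniforms) and choosing the block size so that three competing quantities balance simultaneously: (i) the expected within-block costs sum to the target $n^{1-1/d}$, (ii) the per-block failure probability $e^{-k^*}$ survives the union bound over $O(n^{1-1/d})$ blocks, and (iii) the trivial bound on the final boundary block still costs only $O(n^{1-1/d})$. The choice $k^*=\lceil n^{1/d}\rceil$ satisfies all three conditions for every $d\ge 2$.
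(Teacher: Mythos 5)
Your proposal is correct and mirrors the alternative route that the paper itself sketches in \cref{remark:LB-d-sphere}'s neighbouring remark (the one following \cref{UB:polar-sphere}), rather than the route the paper actually follows. Both you and the paper begin identically: restrict to $\Theta_d$, observe that the $Z_{xy}$ (the paper calls them $w_{xy}$) are i.i.d.\ sums of $d$ uniforms because distinct cycle-pairs produce disjoint facet sets, and reduce to minimising a Hamilton-cycle cost. The paper then invokes W\"astlund's result on the travelling salesman problem with pseudo-dimension-$d$ edge costs as a black box, which immediately gives a sharp $(1+o(1))c^{-1}\beta_{\mathrm{TSP}}(d)\,n^{1-1/d}$ with an explicit constant. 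You instead run a nearest-neighbour greedy construction and control the sum via a block decomposition and a $d$-dimensional analogue of \cref{claim:Chernoff-sum-min-sum}, exactly as in the proof of \cref{cor}. Your route is more elementary and self-contained, at the cost of a worse (but still $O(1)$) leading constant, which is all the proposition requires; this is the trade-off the paper itself notes.

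Two points you gloss over are worth tightening if this were written out in full. First, unlike in the proof of \cref{cor} --- where the $Z_i$'s are made genuinely independent by routing vertices through the four classes $V_1,\ldots,V_4$ --- your greedy $Y_i$'s are \emph{not} unconditionally independent: $u_{i-1}$ is a random vertex determined by the history. What you do have (and correctly identify) is that the edges examined at step $i$ are fresh, so conditionally on the history $Y_i$ is the minimum of $k_i$ i.i.d.\ Irwin--Hall variables. To then apply the Chernoff-type bound to a within-block sum $\sum_{i\in A_j}Y_i$ one needs a stochastic-domination/martingale version of \cref{claim:Chernoff-sum-min-sum}, since the claim as stated assumes all the $Z_{ij}$ are jointly i.i.d.; the conditional dominance by i.i.d.\ geometrics does give this, but you should say so explicitly. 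Second, the observation that $\Pr(Z<x+\epsilon\mid Z\geq x)$ is minimised at $x=0$, which is the crux of the claim's proof, relies on a unimodality/log-concavity property of the Irwin--Hall distribution; this does carry over to general $d$, but it is the step where the ``porting'' you flag actually has to be checked, not just the constants.
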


\begin{scproof}
We will apply the first moment method to the family $\Theta^d$ defined above.

\noindent Our aim is now to choose a Hamilton cycle $C$ of a complete graph $G$ equipped with suitable edge costs on vertex set $\{v_{d+1},\ldots,v_n\}$ such that the resulting $d$-sphere $S_C$ has low cost $W_{S_C}$. 
The contribution to $W_{S_C}$ of the edge $xy$ is  $w_{xy}:= \sum_{i=1}^d W_{f_{xy}-\{z_i\}}$, and we assign these $w_{xy}$ as the edge-costs on $G$.
Note that the $w_{xy}$ are i.i.d., and that the total cost of the sphere $S_C$ is $\sum_{xy\in E(C)} w_{xy}$, 
so this is an instance of the traveling salesman problem on the complete graph with i.i.d.\ edge costs.

These edge costs are of so-called \emph{pseudo-dimension} $d$, by which we mean that $\Pr(w_{xy}\leq t)/t^d\to c$ as $t\to 0$ for some constant $c\in (0,\infty)$. (In fact, $\Pr(w_{xy}\leq t)=t^d/d!$ for $t\leq 1$.)
In~\cite{wastlund2009replica} (an expanded version of \cite{wastlundmatching}), W\"{a}stlund studied the traveling salesman problem on $K_n$ with i.i.d.\ edge costs of pseudodimension $d>1$. He showed that the minimum cost of a Hamilton cycle is sharply concentrated around $(1+o(1))c^{-1}\beta_{\mathrm{TSP}}(d) n^{1-1/d}$, where $c$ is as above, and $\beta_{\mathrm{TSP}}(d)$ is some constant only depending on $d$. (We have $K_{n-d}$ instead of $K_{n}$, but the asymptotics do not change.)  The proposition follows by picking any $c_d>d! \beta_{\mathrm{TSP}}(d)$.
\end{scproof}

\begin{remark}
Instead of using the result in \cite{wastlund2009replica}, a more elementary proof method is to analyze a greedy algorithm for finding a Hamilton cycle, and apply a Chernoff bound similar to \cref{claim:Chernoff-sum-min-sum}. The resulting upper bound is only worse by a small constant factor. 
\end{remark}

\subsection{Lower bound}
\label{section:lowerbound}

Here we give a first moment lower bound on $M(\F)$, similar to that of \cref{sec:bounds}, for a family $\F$ of subgraphs of $K_{n}^{(d)}$, given some bound on the size of $\F$ and the sizes of members of $\F$. We make no other assumptions on the structure of $\F$. 

\begin{proposition}
\label{LB:general-graph-family}
Let the hyperedges of $K_{n}^{(d)}$ be equipped with i.i.d.\ $U(0,1)$-costs. Suppose $\F$ is a family of subgraphs of $K_{n}^{(d)}$ such that
\begin{enumerate}
\item each $F\in\F$ has at least some $m_0$ edges, and
\item $  \big|\big\{ F\in \F:|E(F)|=m\big\}\big|\leq b^m m^{\beta m}$ for some constants $b > 0$ and $0<\beta\leq 1$,
\end{enumerate}
and let $c>0$ be an arbitrary constant. Then, there exists an $r=r(b,c)$ such that $M(\F) \geq rm_0^{1-\beta}$ with probability at least $1-e^{-cm_0}$.
\end{proposition}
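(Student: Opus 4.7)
The plan is a direct first moment computation, in the same spirit as the lower bound in the proof of \cref{thm:2sphere}(i). Fix $L := r m_0^{1-\beta}$ for a constant $r = r(b,c) > 0$ to be chosen, and let $X$ count the $F \in \F$ with $W_F \leq L$. By \cref{claim:uniforms-sum}, an individual $F$ with $|E(F)|=m$ satisfies $\Pr(W_F \leq L) \leq (Le/m)^m$. Combining this with hypotheses (1) and (2) via linearity of expectation gives
\[
\E[X] \leq \sum_{m \geq m_0} b^m m^{\beta m} (Le/m)^m = \sum_{m \geq m_0} (bre)^m \left(\frac{m_0}{m}\right)^{(1-\beta)m}.
\]

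The crucial point is that the hypothesis $\beta \leq 1$ tames the tail: for $m \geq m_0$ and $1-\beta \geq 0$, each factor $(m_0/m)^{(1-\beta)m}$ is at most $1$, so the expectation is dominated by the geometric series $\sum_{m \geq m_0} (bre)^m$. Choosing $r$ small enough that $q := bre$ satisfies $q/(1-q) \leq e^{-c}$ (for instance $q = 1/(e^c+1)$, giving an explicit value $r = 1/(be(e^c+1))$) yields $\E[X] \leq q^{m_0}/(1-q) \leq e^{-cm_0}$. Markov's inequality then gives $\Pr(M(\F) < L) \leq \Pr(X \geq 1) \leq \E[X] \leq e^{-cm_0}$, which is the desired conclusion.

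The only genuine obstacle is controlling the contribution of very large $m$. If the hypothesis were $\beta > 1$, the factor $m^{(\beta-1)m}$ would grow superexponentially in $m$ and the first moment would blow up immediately, rendering this method useless; this is precisely why the restriction $\beta \leq 1$ appears in the statement and in the discussion of \cref{section:howmany}, and ultimately why the proposition applies to $\Sd_n$ only in dimensions $d$ for which a bound $\beta_d \leq 1$ is available. Once $\beta \leq 1$ is assumed, the remaining steps are routine and the constant $r$ depends only on $b$ and $c$ as required.
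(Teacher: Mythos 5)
Your proof is correct and is essentially the same argument as the paper's: set $L = r m_0^{1-\beta}$, bound $\Pr(W_F\leq L)$ by $(Le/m)^m$ via \cref{claim:uniforms-sum}, use $\beta\leq 1$ to reduce the sum to a geometric series, and apply Markov. The paper writes $r = \delta/(eb)$ and bounds each term by $\delta^m$, which is a trivial reparametrisation of your $q = bre$; your explicit choice of $r$ is a small, welcome addition.
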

\begin{remark}
\label{remark:LB-d-sphere}
Every $d$-sphere on $n$ vertices has at least $m_0= dn-(d-1)(d+2)$ facets (see e.g.~\cite{kalai-LBT}, but note that our $d$ corresponds to  $d-1$ there).
Part (ii) of \cref{thm:dsphere} is a special case of \cref{LB:general-graph-family} with $\F=\Sd_n$ and $\beta$ such that $A_{n,m}\leq 2^{O(m)}m^{\beta m}$.
\end{remark}

For a set $F\in \F$ of $m$ edges, the probability that their cost $W_F$ is below a given value $L=\Theta(n^{\alpha})$ (for some constant $\alpha<1$) decays superexponentially fast as a function of $m$. On the other hand, the number of $F\in \F$ with $m$ edges might increase superexponentially fast. If $\alpha+\beta\leq 1$, then the former decay rate outperforms the latter growth rate, so that $F$'s with few edges dominate the expected number of `cheap' subgraphs.
 
\begin{scproof}
We will apply the first moment method to the number of `cheap' $F\in \F$. 
For some $L=L(n)$ to be determined later, let $X_m$ be the (random) number of graphs $F\in \F$ with precisely $m$ facets and with $W_F\leq L$. 
Then $\E X_m \leq b^m m^{\beta m}\cdot \Pr(W_F\leq L)$, where $F$ is an arbitrary subgraph with $m$ edges.

By \cref{claim:uniforms-sum}, $\Pr(W_F\leq L) \leq (L e/m)^m$. Let $L:= \frac{\delta}{eb}\cdot m_0^{1-\beta}$, for some $\delta\in (0,1)$. Since $\beta\leq 1$, we have $m_0^{1-\beta}\leq m^{1-\beta}$, so that
\(
\E X_m \leq \left(bm^\beta Le/m\right)^m\leq \delta^m
\).
By Markov's inequality, $\Pr(M(\F)\leq L)  $ is then at most $\sum_{m\geq m_0}\E X_m\leq \sum_{m\geq m_0}\delta^{m}<\delta^{m_0}/(1-\delta)$. This is at most $e^{-cm_0}$ if we pick $\delta=\delta(c)$ sufficiently small. 
\end{scproof}

\section{Concentration for 2-spheres}
\label{sec:2-conc}

To prove concentration in the case $d=2$, i.e.\ part (ii) of \cref{thm:2sphere}, we will use a concentration inequality from \cite{patch-ineq}, for which we need the following definitions.\footnote{These definitions are given in \cite{patch-ineq} for general families of sub-hypergraphs, but we present them only for the family $\Sd_n$ and subsets of it.}
For any $\F\subseteq \Sd_n$ and sub-hypergraphs $G,P$ of $K_n^{(d)}$, we say that $P$ is a \emph{$G$-patch} if $G\cup P$ contains an $S\in \F$. We define $\rho(G)=\rho_\F(G)$ as the minimum number of edges in a $G$-patch:
\[
\rho_\F(G):=\min\{e(P):P\textrm{ is a $G$-patch}\}=\min\{e(S-G): S\in \F\}.
\]
We think of $G$ as having $\rho_\F(G)$ many `holes', that are `patched' with $P$. Given a sub-hypergraph $G$, let the random variable $\mathrm{Patch}_\F(G)$ be the minimum cost of a $G$-patch, i.e.\ $\mathrm{Patch}_\F(G):= \min\{W_P: P \textrm{ is a }G\textrm{-patch}\}$.

\begin{definition}
We say that a sub-hypergraph $G$ of $K_n^{(d)}$ is $(\lambda,\eps)$-\emph{patchable} (with respect to the random costs $W_e$ and family $\F$) if $\Pr(\mathrm{Patch}_\F(G)>\lambda)<\eps$.
The family $\F$ is said to be $(r,\lambda,\eps)$-\emph{patchable} if every $G$ with $\rho_\F(G)\leq r$ is  $(\lambda,\eps)$-patchable.
\end{definition}
We let $\ell(\F)$ denote the maximum size of a member of $\F$, i.e.\ 
$\ell(\F) := \max\{e(F):F\in \F\}$, where $e(F)$ denotes the number of facets in $F$.

\begin{theorem}[{\cite[Theorem 3.1]{patch-ineq}}, special case $q=1$.]
\label{thm:patchability-ineq}
Let  $\F$ be a family of sub-hypergraphs of $K_n^{(d)}$, and
let $Q$ be the quantile function of $M(\F)$, i.e.\ $\Pr(M(\F)\leq Q(p))=p$ for all $p\in (0,1)$.
If $\F$ is $(k,\lambda,\eps)$-patchable for some $\lambda$, $\eps>0$ and $k\geq \sqrt{8\log(\eps^{-1})\cdot \ell(\F)}$, then with probability at least $1-2\eps$,
\begin{equation}
\label{ineq:patch}
    M(\F)\leq \left(\sqrt{Q(\eps)}+\sqrt{\lambda} \right)^{2} .
\end{equation}
In particular, if $\medm$ is the median of  $M(\F)$, and we have $\eps<1/4$ and $\lambda\leq \medm$, then
with probability at least $1-3\eps$ both $M(\F)$ and $\mu$ lie in the interval $[Q(\eps),(\sqrt{Q(\eps)}+\sqrt{\lambda})^{2}]$, and hence
\[
\lvert M(\F)-\medm\rvert \leq 3 \sqrt{\lambda \medm}.
\]
\end{theorem}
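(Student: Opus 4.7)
The bound $(\sqrt{Q(\eps)}+\sqrt{\lambda})^{2}$ is characteristic of a Talagrand-type concentration inequality for minimum-weight functionals, so my plan is to combine Talagrand's convex distance inequality with the patchability hypothesis. Set $L:=Q(\eps)$ and $A:=\{M(\F)\leq L\}$, so that $\Pr(A)\geq \eps$. Talagrand's inequality gives that, outside an event of probability at most $\eps$, the convex distance $d_{T}(\omega,A)$ is at most $t_{*}=2\sqrt{\log(\eps^{-1})}$; equivalently, for a suitably chosen nonnegative vector $\alpha(\omega)$ with $\|\alpha\|_{2}=1$, there exists $\omega'\in A$ such that the differing set $J:=\{e:W_{e}(\omega)\neq W_{e}(\omega')\}$ satisfies $\sum_{e\in J}\alpha_{e}\leq t_{*}$.

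The crux is to choose $\alpha$ so that this convex-distance bound translates into a bound on $|F^{*}\cap J|$, where $F^{*}\in \F$ is a minimiser certifying $\omega'\in A$. Taking $\alpha$ proportional to the indicator of an appropriate $F^{*}$, normalised by $\sqrt{|F^{*}|}\leq \sqrt{\ell(\F)}$, yields $|F^{*}\cap J|\leq t_{*}\sqrt{\ell(\F)}\leq k$, where the last inequality is precisely the hypothesis $k\geq \sqrt{8\log(\eps^{-1})\ell(\F)}$. Now the patchability hypothesis applies to $G:=F^{*}\setminus J$: since $\rho(G)\leq |F^{*}\cap J|\leq k$, except on a further event of probability $\eps$ there is a $G$-patch $P$ with $W_{P}(\omega)\leq \lambda$. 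Since edges of $G$ lie outside $J$, $W_{G}(\omega)=W_{G}(\omega')\leq W_{F^{*}}(\omega')\leq L$, and combining yields $M(\F)(\omega)\leq W_{G}(\omega)+W_{P}(\omega)\leq L+\lambda\leq (\sqrt{L}+\sqrt{\lambda})^{2}$, with the two failure events contributing the total probability $2\eps$.

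The main obstacle is a subtle circularity: the weight vector $\alpha$ in Talagrand's inequality may depend only on $\omega$, whereas the quantity we need to control, $|F^{*}\cap J|$, depends on $\omega'$ through $F^{*}$. I would handle this either by (i) using the two-set Talagrand inequality $\Pr(A)\Pr(B)\leq e^{-d(A,B)^{2}/4}$ applied to $B:=\{M(\F)>(\sqrt{L}+\sqrt{\lambda})^{2}\}$, in a metric weighted by the edge costs themselves, or (ii) choosing $\alpha$ based on the $\omega$-minimiser $F^{**}(\omega)$ and then swapping: any $\omega'\in A$ close enough to $\omega$ in the convex distance makes $F^{**}(\omega)$ an approximate minimiser under $\omega'$, allowing the argument to be rerun symmetrically. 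Either way, the careful bookkeeping of which quantities are measurable with respect to which configuration is the delicate step; a convenient trick is to work throughout with $\sqrt{M(\F)}$, for which the ``square-root'' form of the bound becomes natural via the elementary inequality $\sqrt{a+b}\leq \sqrt{a}+\sqrt{b}$.

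For the ``in particular'' clause, the median bound is a direct consequence: since $\eps<1/4$, the definition of $Q$ gives $Q(\eps)\leq \medm$, and combining with the tail bound of the theorem places both $M(\F)$ and $\medm$ in the interval $[Q(\eps),(\sqrt{Q(\eps)}+\sqrt{\lambda})^{2}]$ with probability at least $1-3\eps$ (failures coming from $M(\F)<Q(\eps)$, Talagrand, and patching). The width of this interval is $\lambda+2\sqrt{Q(\eps)\lambda}\leq \lambda+2\sqrt{\medm\lambda}\leq 3\sqrt{\lambda\medm}$, where the last inequality uses the hypothesis $\lambda\leq \medm$, and the conclusion $|M(\F)-\medm|\leq 3\sqrt{\lambda\medm}$ follows.
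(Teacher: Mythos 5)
The paper states this theorem without proof, citing it as a special case of Theorem~3.1 of the reference \texttt{patch-ineq}; there is no in-paper argument to compare against, so I will assess your attempt on its own merits.

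Your high-level strategy is the right one: the hypothesis $k\geq\sqrt{8\log(\eps^{-1})\ell(\F)}$ is exactly the signature of Talagrand's convex-distance inequality combined with certifiability (matching $t_*=2\sqrt{2\log(\eps^{-1})}$, so note you dropped a factor $\sqrt{2}$ in your $t_*$), and the ``in particular'' clause is handled correctly, including the use of $Q(\eps)\leq\medm$ and $\lambda\leq\medm$ to bound the interval width by $3\sqrt{\lambda\medm}$. The patchability hypothesis plugs in precisely where a na\"ive Lipschitz argument would give an additive $+k$ rather than $+\lambda$.

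However, the circularity you flag is a genuine, unresolved gap, not just a bookkeeping subtlety, and neither of your two proposed fixes closes it. Talagrand's inequality gives $d_T(\omega,A)=\sup_\alpha\inf_{\omega'\in A}\sum_{e\in J}\alpha_e$, so $\alpha$ is committed before the nearby $\omega'$ (hence its certifying minimiser $F^*(\omega')$) is revealed; setting $\alpha$ to be supported on $E(F^*(\omega'))$ is not legal. Your fix (ii), using $F^{**}(\omega)$, fails concretely: with $G:=E(F^{**}(\omega))\setminus J$, patchability of $\omega$ yields $M(\F)(\omega)\leq W_G(\omega)+\lambda$, but $W_G(\omega)\leq W_{F^{**}(\omega)}(\omega)=M(\F)(\omega)$, so the bound is vacuous. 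The whole point of going through $\omega'\in A$ is that $W_G(\omega)=W_G(\omega')\leq L$ when $G$ is taken inside $F^*(\omega')$, which is exactly the object you cannot see when choosing $\alpha$. Your fix (i) proposes a Talagrand metric weighted by the (random) edge costs, but Talagrand's inequality is for deterministic weight vectors; a random-$\alpha$ version would need to be stated and justified. So the crucial combinatorial step that converts the convex-distance bound into ``$\rho(G)\leq k$ for a $G$ whose $\omega$-cost is $\leq L$'' remains missing, and the cited paper presumably devotes its real work to precisely this step.
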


In order to prove part (ii) of \cref{thm:2sphere} using \cref{thm:patchability-ineq}, we need to establish that $\Stwo_n$ satisfies a patchability condition. This is the main effort towards part (ii), which  will follow easily from the following lemma.
\begin{lemma}
\label{lemma:2-patch}
Suppose $k: \N \to \N$ satisfies  $k=\Omega(\sqrt{n})$ and $k=o(n)$. Then for any $r>0$ there is a constant $A=A(r)$ such that 
$\Stwo_n$ is $(k,Ak^{\frac{3}{4}}/n^{\frac{1}{12}},r^{k})$-patchable.

In particular for $r=1/2$ and $k=\Theta(n^{0.51})$, there is a $\lambda=o(n^{0.3})$ such that for any subgraph $H\subseteq K_n^{(3)}$ with $\rho(H)\leq k$,
\[\Pr(\mathrm{Patch}(H)\geq \lambda)\leq 2^{-k}.\]
\end{lemma}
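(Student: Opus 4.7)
The plan is to construct a cheap patch explicitly via a subdivision-based greedy procedure in the spirit of the proof of \cref{cor}. Given $H$ with $\rho(H)\leq k$ we fix a witness sphere $S_0\in\Stwo_n$ with $|E(S_0)\setminus E(H)|\leq k$, and aim to build a new sphere $S$ whose difference $E(S)\setminus E(H)$ is a cheap set of facets. Since $H$ agrees with $S_0$ on all but $k$ facets, it is natural to leave most of $S_0$ untouched and concentrate our modifications in a small region around the missing facets.

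Concretely, we select a sub-complex $D\subseteq S_0$ containing $E(S_0)\setminus E(H)$, and re-triangulate the region bounded by $\partial D$ using the iterative barycentric-subdivision construction from the proof of \cref{cor}: at each of $m$ steps a boundary triangle is subdivided by inserting a new vertex chosen greedily from a pool of candidates of size $P$, so as to minimise the sum of the three new edge-costs. Using the residue-class partition of \cref{cor} to enforce independence across steps, the $m$ greedy-choice random variables $Z_i$ split into four independent groups, each $Z_i$ being the minimum of $\Theta(P)$ iid sums of three $U(0,1)$ variables, hence with mean $\mu\sim P^{-1/3}$ and sub-exponential tails. Applying \cref{claim:Chernoff-sum-min-sum} to the resulting block sums yields $\sum_iZ_i=O(m/P^{1/3})$ except with probability $\exp(-\Omega(m))$, which is at most $r^k$ as soon as $m=\Omega(k)$, the implicit constant being tuneable in the multiplicative factor $A=A(r)$.

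To match the claimed bound $O(k^{3/4}/n^{1/12})$, the construction must be tuned so that $m/P^{1/3}=O(k^{3/4}/n^{1/12})$; a natural choice is $m=\Theta(k)$ with pool size $P=\Theta(k^{3/4}n^{1/4})$. The main obstacle lies precisely in the combinatorial engineering required to realise this tuning: one must choose $D$ so that it contains all $k$ missing facets, and enlarge the candidate pool (drawing on both vertices removed by $D$ and a carefully chosen reservoir of auxiliary vertices whose $S_0$-stars lie in $H$) so that $\Omega(k^{3/4}n^{1/4})$ independent candidate vertices are available at every subdivision step, all while keeping the subdivisions topologically consistent with $S_0\setminus D$ so that the final complex $S$ is a valid spanning $2$-sphere. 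The hypotheses $k=\Omega(\sqrt n)$ and $k=o(n)$ enter exactly here: the former makes $m=\Theta(k)$ large enough for the Chernoff bound to beat $r^k$ for any prescribed $r>0$, and the latter ensures that a reservoir of size $\Theta(k^{3/4}n^{1/4})=o(n)$ fits inside $[n]$. Once the bound $O(k^{3/4}/n^{1/12})$ is established, the specialisation at $r=1/2$ and $k=\Theta(n^{0.51})$ gives $\lambda=O(n^{3\cdot 0.51/4-1/12})=O(n^{0.2992})=o(n^{0.3})$, as claimed.
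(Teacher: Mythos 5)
Your high-level plan (barycentric subdivisions, greedy choice from a pool, Chernoff bound, tune the pool size) captures the flavour of the paper's argument and the final arithmetic ($m/P^{1/3}\sim k^{3/4}/n^{1/12}$ at $P\sim k^{3/4}n^{1/4}$, and $3\cdot 0.51/4-1/12<0.3$), but the core combinatorial step is missing, and in fact the strategy you outline cannot work as stated.

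You propose to choose a sub-complex $D\subseteq S_0$ containing all $k$ missing facets and re-triangulate the region it bounds. But the $k$ facets of $E(S_0)\setminus E(H)$ can be scattered arbitrarily over $S_0$ (e.g.\ pairwise vertex-disjoint), so there is in general no single region $D$ with a small boundary that contains them all; if you take $D$ to be their union you free up essentially no interior vertices, and if you take $D$ large enough to contain $\Theta(k^{3/4}n^{1/4})$ interior vertices its boundary may itself have length $\Omega(k^{3/4}n^{1/4})$, which already costs $\Omega(k^{3/4}n^{1/4})\gg k^{3/4}/n^{1/12}$ just to triangulate. Similarly, the proposed "reservoir of auxiliary vertices whose $S_0$-stars lie in $H$" cannot be used as free candidates without first deleting each such vertex's star, which creates new holes; and the unused candidates in your pool that came from $D$'s interior have to be re-inserted to keep the sphere spanning, an issue your sketch never addresses. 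You essentially flag all of this as "the combinatorial engineering required," which is precisely the content of the lemma.

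The paper handles this by decoupling the location of the free vertices from the location of the missing facets. It applies an asymmetric planar separator theorem to $S_0$ (for a parameter $s$ with $k\ll s\ll n$) to find a cycle $C$ of length $O(\sqrt s)$ whose smaller side $Q$ has $\Theta(s)$ vertices; deleting $Q$ and triangulating $C$ with no interior vertices adds only $O(\sqrt s)=o(k)$ new "red" facets, regardless of where the original $k$ missing facets lie. This produces a sphere $S'$ on $n-|Q|$ vertices with $(1+o(1))k$ red facets and $\Theta(s)$ free vertices. One then barycentrically subdivides each red facet \emph{in place} with a greedily chosen free vertex (cost $O(k/s^{1/3})$), and the crucial second phase matches the \emph{remaining} $\Theta(s)$ free vertices to already-present "green" facets (cost $O(s/n^{1/3})$) so that the final sphere is spanning. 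Balancing the two costs gives $s=k^{3/4}n^{1/4}$ and hence $O(k^{3/4}/n^{1/12})$. Independence for the Chernoff bound is arranged not via your residue-class device but by 4-colouring the face-dual of $S'$ and greedily matching within colour classes. Without the separator step and the second (green) matching phase, I don't see how your construction produces a spanning sphere with the claimed cost, so the proof as proposed has a genuine gap.
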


Before proving \cref{lemma:2-patch}, let us see how it implies part (ii) of \cref{thm:2sphere}:

\begin{scproof}[Proof of \cref{thm:2sphere}(\textnormal{ii})] 
Recall that a $2$-sphere with $n$ vertices has $2n-4$ faces, or in other words $\ell(\Stwo_n)=m=2n-4$.
By \cref{thm:2sphere}(i), the median $\medm$ of $M(\Stwo_n)$ satisfies $\medm=\Theta(\sqrt{n})$.

Let $\eps=\exp(-n^{0.02})/3$, and $k=\sqrt{8m\log(\eps^{-1})}=\Theta(n^{0.51})$.
\cref{lemma:2-patch}, applied with $r=1/2$, implies that $\Stwo_n$ is $(k,\lambda,2^{-k})$-patchable for some $\lambda=o(n^{0.3})$. Since $2^{-k}\ll \eps$ and $\lambda\ll \mu$, the assumptions of \cref{thm:patchability-ineq} are satisfied with $\mu,\lambda, k$ and $\eps$ as above, and we obtain 
\[
|M -\medm|\leq 3\sqrt{\medm \lambda}=o(n^{0.4}),
\]
with probability at least $1-3\eps= 1-\exp(-n^{0.02})$. 
\end{scproof}

Thus it only remains to prove our lemma. 

\begin{scproof}[Proof of \cref{lemma:2-patch}]
We are given a hypergraph $H$ with $\rho(H)=:k$. That is, there is an $S\in\Stwo_n$ such that $P:=S-H$ satisfies $|P|= k$. We can assume without loss of generality that $S$ is the disjoint union of $P$ and $H$, since otherwise we may remove superfluous faces in $H$ by replacing it with $H\cap S$.
Our aim is show that there exists an $H$-patch with cost $O(k^{{3}/{4}}/n^{{1}/{12}})=o(k)$ with high probability. That is, a $P'$ such that $H\cup P'$ contains \emph{some} sphere in $\Stwo_n$ -- not necessarily the same as $S$.
Before detailing the formal proof, we present an overview. The proof proceeds in two steps: 
\begin{figure}
\centering
\subcaptionbox{A sphere with some faces marked red (if colour is shown), with a short planar separator $C$ (dark green) with a set ($Q$) of 8 interior vertices.}{\includegraphics[width=0.48\textwidth]{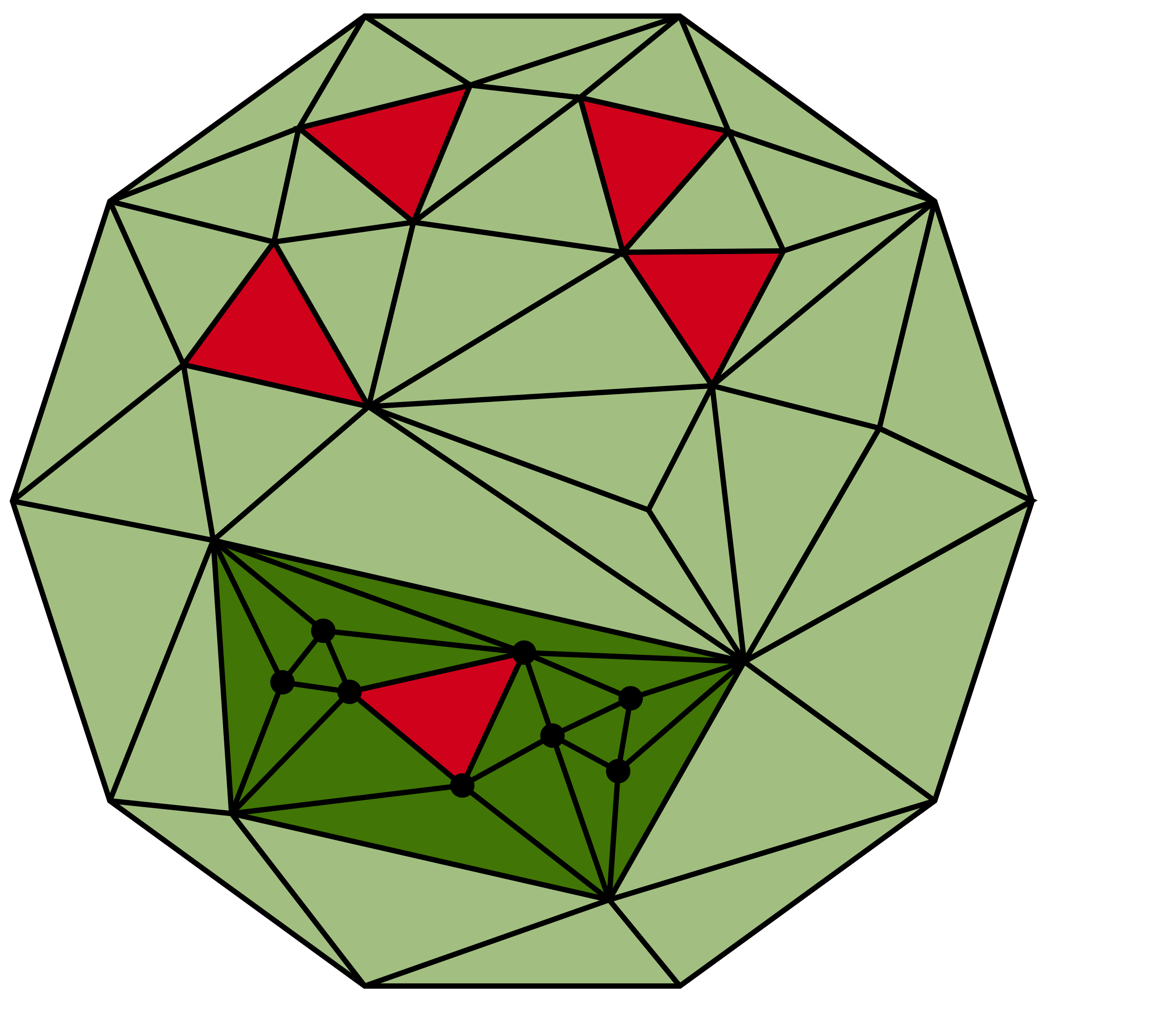}}%
\hfill
\subcaptionbox{Replace the interior of $C$ with a red triangulation without interior vertices, freeing up the vertices in $Q$. Match all red faces with free vertices.}{\includegraphics[width=0.48\textwidth]{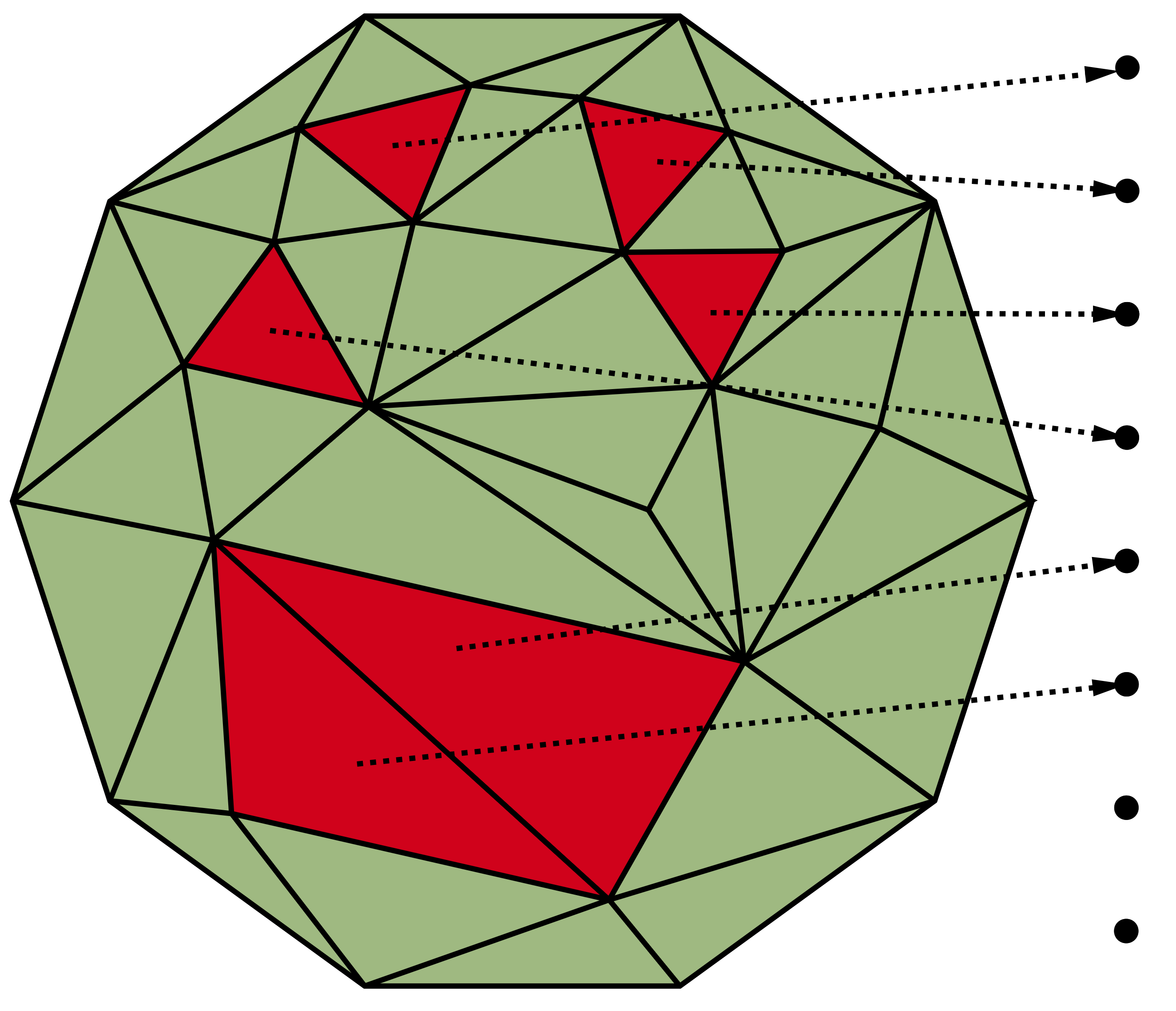}}%
\hfill
\subcaptionbox{Match the remaining free vertices with green faces.}{\includegraphics[width=0.48\textwidth]{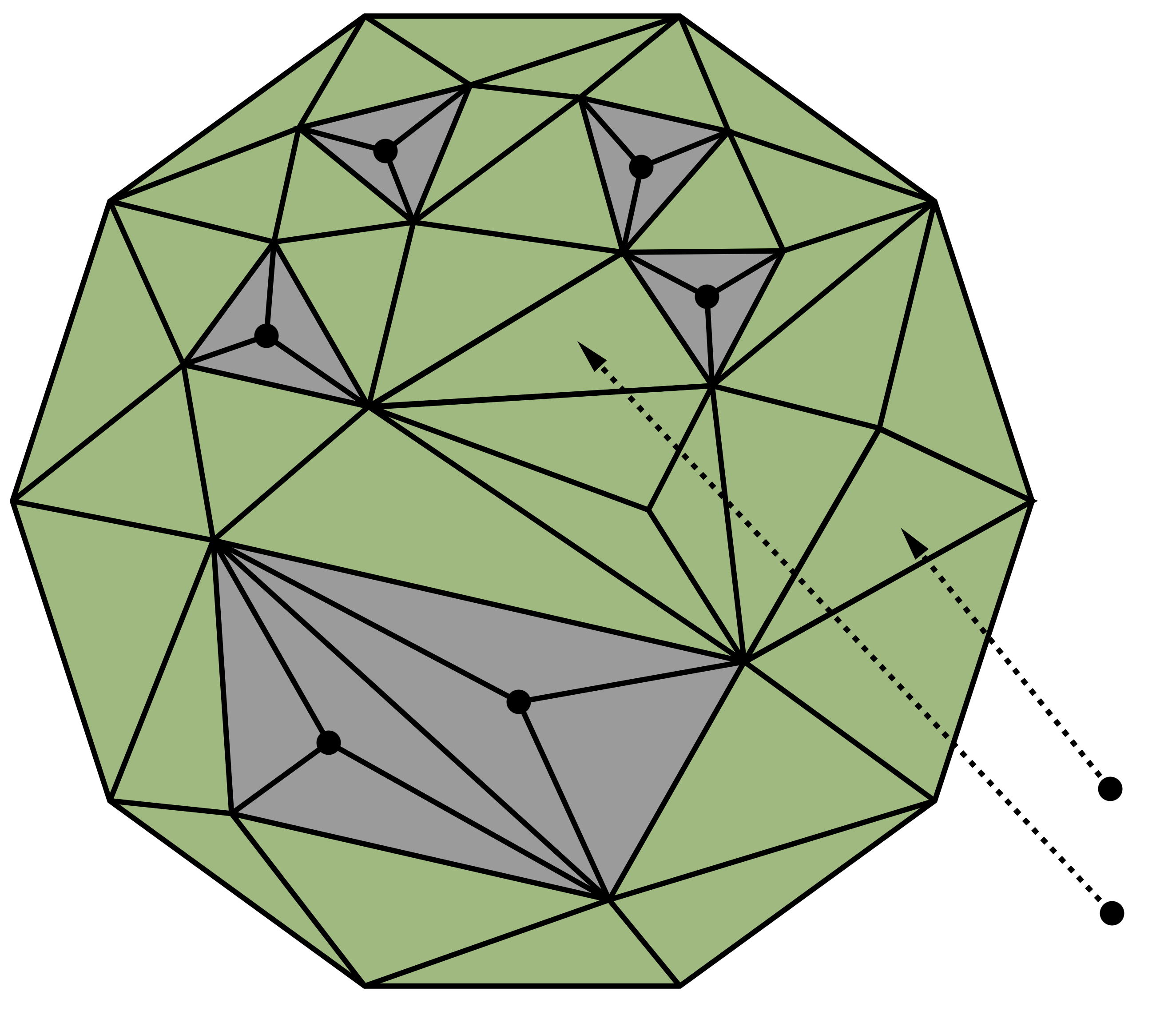}}%
\hfill
\subcaptionbox{\raggedright Patched sphere: Consists mostly of the original green faces, and some additional gray faces. }{\includegraphics[width=0.48\textwidth]{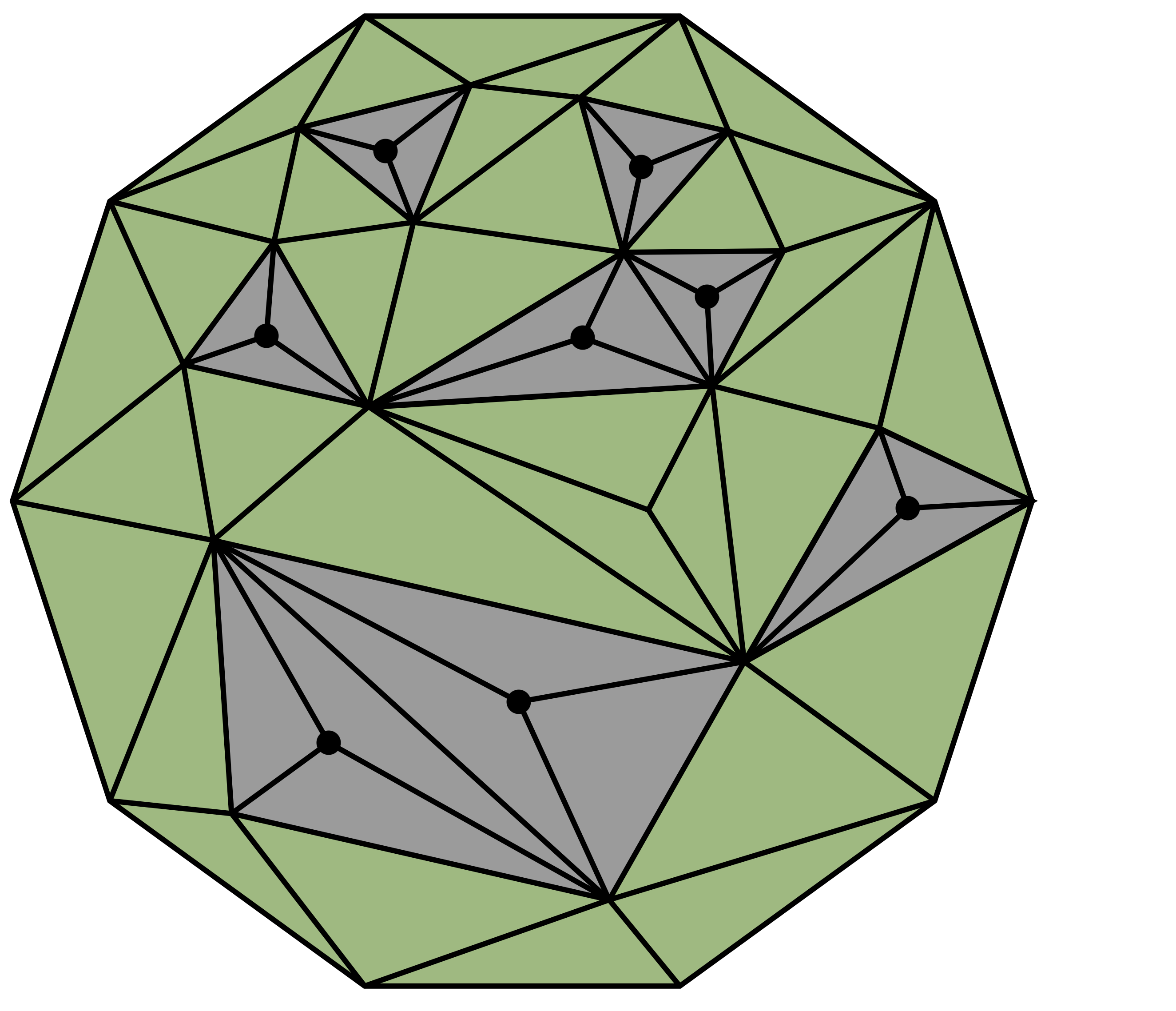}}%
\hfill
\caption{`Patching' a $2$-sphere. The aim is to remove the red faces (and possibly some green faces) and add some new faces of low total cost, such that the resulting complex is a $2$-sphere. We show that doing the matchings in (b) and (c) greedily gives sufficiently cheap faces in the barycentric subdivisions (gray).}
\label{fig:2patch}
\end{figure}
\begin{enumerate}
    \item We color the faces in $H$ green, and the faces in $P$ red. We use an asymmetric version of the Lipton--Tarjan planar separator theorem (\cref{lemma:planarsep} below)  to remove a set $Q$ of roughly $s$ vertices from $S$ (for some $s$ with $\sqrt{s}\ll k\ll s\ll n$), such that the boundary of $S\backslash Q$ is a cycle $C$ of length $O(\sqrt{s})$ (\cref{fig:2patch}(a)). Patch the hole in $S\backslash Q$ by arbitrarily triangulating $C$ without interior vertices. Color the newly added triangles red. This gives us a sphere $S'$ on $n-|Q|$ vertices, with at most $k+O(\sqrt{s})\approx k$ of its faces being red and the rest green, as well as $|Q|\approx s$ free vertices (\cref{fig:2patch}(b)).
    \item For any face $xyz$ in $S'$, we can use one of the free vertices $v\in Q$ to replace it with a barycentric subdivision, i.e.\ with the three faces $xyv$, $yzv$ and $zxv$. We do this for all red faces (\cref{fig:2patch}(b)), and for sufficiently many green faces to use up all vertices in $Q$ (\cref{fig:2patch}(c)).
    We want to minimize the total cost of the added faces of such a matching between faces in $S'$ and vertices in $Q$. We pick this matching greedily, and optimize the resulting cost over $s$.
\end{enumerate}

\subsubsection*{Step 1}
We proceed with the formal proof. For the first step, we will use the following asymmetric version of the Lipton--Tarjan planar separator theorem.\footnote{The condition that $n>2s$ is added by us to the lemma; the proof in \cite{planarsep} implicitly assumes that $s$ is not too large. In our application, $s\ll n$.}
\begin{lemma}[{\cite[Lemma 3.8 ]{planarsep}}]
\label{lemma:planarsep}
Let $s>0$ be a sufficiently large integer, and let $G$ be a triangulation of $\mathbb{S}^2$ with $n>2s$ vertices. Then $G$ contains a cycle $C$ of length at most $28\sqrt{s}$, such that $G\backslash C$ is disconnected into two sets of vertices, $Q$ and $Q'$, with the smaller of them satisfying $s/14 \leq |Q|\leq s$.
\end{lemma}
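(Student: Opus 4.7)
The plan is to adapt the classical Lipton--Tarjan planar separator argument to produce a \emph{cycle} separator (rather than a vertex cut) hitting the asymmetric size window $[s/14,s]$. The cycle requirement is natural for a triangulation of $\mathbb{S}^2$: every simple cycle bounds two topological disks, so a cycle separator falls naturally out of a fundamental-cycle argument provided the spanning tree we start from has controlled depth.

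First, I would fix a BFS tree $T$ rooted at an arbitrary vertex $v_0$, let $L_i$ denote the set of vertices at BFS distance $i$, and write $N_k:=\sum_{j\leq k}|L_j|$. Let $k^{\ast}$ be the smallest index with $N_{k^{\ast}}\geq s/2$; this exists because $n>2s$. Because the BFS tree may have depth $\Theta(n)$, immediate fundamental cycles are too long, so the next step is to localise to a short slab around level $k^{\ast}$. I would look for two indices $a<k^{\ast}<b$ with $b-a\leq 2\sqrt{s}$ and $|L_a|+|L_b|\leq 2\sqrt{s}$. If no such pair exists, then every level within $\sqrt{s}$ of $k^{\ast}$ already has at least $2\sqrt{s}$ vertices, forcing the corresponding slab to contain $\Omega(s)$ vertices; the argument is then rerun restricted to this dense slab, where the ambient depth is already $O(\sqrt{s})$.

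Second, within the bounded-depth slab I would contract the regions above $L_b$ and below $L_a$ each to a single super-vertex. The result is again a triangulation of $\mathbb{S}^2$, and a BFS tree rooted at one super-vertex has depth at most $b-a+O(1)\leq 2\sqrt{s}+O(1)$. Now I would apply the standard fundamental-cycle argument: weight each non-super vertex by $1$ and iterate through non-tree edges in a planar order. Each fundamental cycle has length at most $2(b-a)+O(1)\leq 28\sqrt{s}$, and the weight of the interior disk changes by a bounded amount between consecutive edges. Some fundamental cycle must therefore place the interior weight in the window $[s/14,s]$; pulling this cycle back to $G$ gives the claimed separator.

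The hard part will be simultaneously controlling the \emph{cycle length} $\leq 28\sqrt{s}$ and the \emph{absolute size} $|Q|\in[s/14,s]$. Classical separator theorems provide only relative balance (such as $(1/3,2/3)$), so extra work is needed to hit a prescribed absolute target. The argument above handles this by fixing $s$ as the scale and then tuning along the sequence of fundamental cycles, whose interior weights change by at most $O(\sqrt{s})\ll s$ between consecutive candidates, so the discrete sequence cannot jump over the window $[s/14,s]$. Checking that the numerical constants $14$ and $28$ survive both the slab-extraction step and the fundamental-cycle sweep, rather than being degraded to larger constants, is the main technical chore of the proof.
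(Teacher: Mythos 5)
The paper does not prove this lemma itself; it imports it verbatim as Lemma~3.8 of the cited reference (adding only the hypothesis $n>2s$, as the footnote explains), so there is no in-paper proof to compare against. Your proposal is therefore a from-scratch reconstruction, and the strategy you choose --- BFS layering to find two thin nearby levels, contract the outer regions, then sweep fundamental cycles in the contracted small-radius triangulation --- is indeed the Lipton--Tarjan template that such cycle-separator lemmas are built on. So the plan is on the right track.

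However, the step you explicitly flag as ``the hard part'' is also the step where your sketch has a genuine gap. You assert that, after ordering the non-tree edges ``in a planar order,'' the interior weight of the fundamental cycle ``changes by a bounded amount between consecutive edges,'' and conclude by a discrete intermediate-value argument. This is not automatic. Two non-tree edges of the same triangle can have fundamental cycles whose interiors differ by an arbitrary amount: removing one cotree edge at a shared dual vertex isolates one subtree of the cotree, removing the other isolates a \emph{disjoint} subtree, and these can have wildly different sizes. A bounded-jump statement only holds along a carefully chosen nested sequence (at each step replacing the current cycle by a fundamental cycle whose interior is a strict subset and shrinks by at most the boundary length, which is $O(\sqrt{s})$, not $O(1)$ as ``bounded amount'' might suggest); constructing that sequence when the inner triangle has two or three non-tree edges is exactly the nontrivial case analysis of the Lipton--Tarjan fundamental-cycle lemma. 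Without it, the intermediate-value step is not justified. Secondary, smaller gaps: the fallback ``rerun restricted to this dense slab'' needs to be set up as an actual recursion with a termination argument (the standard route is to observe that if no thin level exists within $\sqrt{s}$ above $k^{\ast}$ then one can take $b$ past the last nonempty level in the slab, rather than recurse); and the constants $28$ and $14$ are asserted rather than tracked through the slab width, the contraction, and the sweep. None of these is fatal to the strategy, but as written the proposal does not constitute a proof of the statement.
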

We apply this lemma with $G$ being the $1$-skeleton of $S$, and $s=s(k,n)$ some integer such that $\sqrt{s}\ll k\ll s\ll n$, whose precise value will be determined later.
Let $C$ and $Q$ be the resulting cycle and smaller set of vertices respectively, and let $q:=|Q|$, so that $s/14\leq q \leq s$ and $|C|\leq 28\sqrt{s}$. 

Let $D'$ be the simplicial complex obtained from $S$ by deleting 
the vertices in $Q$ (and all simplices containing any of them), and note that $D'$ is homeomorphic to a disc with boundary $C$.  

Let $D$ be an arbitrary triangulation of $C$ without  interior vertices. That is, $D$ is a $2$-manifold homeomorphic to a disc, with boundary $C$ and whose $1$-skeleton is an outerplanar graph. 
Easily, the number of triangular faces in $D$ equals the number of edges in $C$ minus 2.
Color the faces of $D$ red. Note that $D$ and $D'$ are both discs with the same boundary $C$, and so $S':=D \cup D'$ is a simplicial $2$-sphere on $n-q$ vertices. Let $R$ be the set of the red faces of $S'$, and note that $R\subseteq P\cup D^2$, where $D^2$ denotes the set of faces of $D$. Thus there are at most  $|P|+|D^2|\leq k+28\sqrt{s}=(1+o(1))k$ red faces of $S'$. Let $G$ be the set of green faces of $S'$, so that $S'$ is the disjoint union of $R$ and $G$.

\subsubsection*{Step 2}

For any face $\sigma=\vx\vy \vz $ in $S'$, we can replace $\sigma$  with a barycentric subdivision using any vertex $v\in Q$. That is, replace $\sigma$ with the faces ${\vx\vy v}, {\vy \vz  v}$ and ${\vz \vx v}$ at cost
\begin{equation*}
    X_{\sigma v}:= W_{\vx\vy v}+W_{\vy \vz  v}+W_{\vz \vx v}.
\end{equation*}
Doing so for all red faces will result in a $2$-sphere, possibly on fewer than $n$ vertices because not all $q$ vertices in $Q$ have been used. We can additionally replace some green faces with their barycentric subdivisions until all vertices in $Q$ have been used, finally obtaining a $2$-sphere on $n$ vertices.

For convenience, we construct an auxiliary graph: the complete bipartite graph $K$ with left vertex set $R\cup G$ and right vertex set $Q$, with edge costs $X_{\sigma v}$. Since $S'$ has $n-q$ vertices, the left set has $|R\cup G|=2(n-q)-4=:\ell$ vertices, while the right has $|Q|=q$. By construction, $\ell\gg q$.

Let $\mathcal{M}$ be the set of matchings in $K$ containing every vertex in $Q \cup R$
(and hence such that most vertices in $G$ are \emph{not}  matched).
Using the convention of identifying an edge $\sigma v$ of  $K$ with the set of simplices $\{{\vx\vy v},\, {\vy \vz  v}$,\, ${\vz \vx v}\}$, any $M\in \mathcal{M}$ is an $H$-patch. 
The barycentric subdivisions described above correspond to picking a matching in $\mathcal{M}$.  

The total cost of the matching $M$ is $X_M:=\sum_{\sigma v\in \mathcal{E}(M)} X_{\sigma v}$, and this is also the cost of the corresponding $H$-patch. We want to show that there exists an $M\in \mathcal{M}$ of low total cost, using a greedy exploration procedure.

However, this procedure would have been easier to analyse if the edge costs in $K$ were independent. In our case, though, the edge costs $X_{\sigma v}$ and $X_{\sigma' v}$ are not independent if the triangular faces $\sigma$ and $\sigma'$ overlap in an edge: If $\sigma=\vx\vy \vz $ and $\sigma'=\vx\vy \vz '$, then both $X_{\sigma v}$ and $X_{\sigma' v}$ depend on $W_{\vx\vy v}$.
Since this dependence is only between faces sharing an edge, consider the dual graph of the 1-skeleton of $S'$, where vertices corresponding to the triangular faces are connected if they share an edge. This graph is $3$-regular, 
thus it is $4$-colourable. Let $R_1,R_2,R_3$ and $R_4$ be the intersections of $R$ with the four colour classes.
Also let $G_1\subseteq G$ be the largest intersection of a colour class with $G$, so that $|G_1|\geq |G|/4=\Omega(n)$. At most $3|R|=o(n)$ faces in $G_1$ share an edge with a red face. Removing these faces leaves a set $G_0\subseteq G_1$, with $|G_0|=\Omega(n)$.
Crucially, the $R_i$'s and $G_0$ picked in this way are independent sets.
Finally, let $Q_1$, $Q_2$, $Q_3$ and $Q_4$ be a partition of $Q$ into four sets of similar sizes -- each with at least $|Q|/5$ vertices, say. We will first greedily match all vertices in each $R_i$ with vertices in the corresponding $Q_i$, and then greedily match the vertices in $Q$ that have not yet been matched, with vertices in $G_0$.
We will use the following Chernoff bound on the cost of these matchings. 
\begin{claim}[Chernoff bound]
\label{claim:Chernoff-sum-min-sum}
Let $U_1,U_2$ and $U_3$ be i.i.d.\ random variables following a $U(0,1)$-distribution. Assume $\{Z_{ij}\}_{1\leq i\leq k_0, \,1\leq j\leq n_0}$ are i.i.d.\ random variables such that $Z_{ij}\overset{d}{=} U_1+U_2+U_3$.
Then for any $b\geq 20k_0$,
\[
\Pr\left(\sum_{i}^{k_0}\min_{j} Z_{ij}\geq b n_0^{-1/3}\right)\leq e^{-b/25}
\]
\end{claim}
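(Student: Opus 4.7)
The plan is a standard exponential Markov bound on $S := \sum_{i=1}^{k_0} Y_i$, where $Y_i := \min_{1 \le j \le n_0} Z_{ij}$; by independence of the $Z_{ij}$, the $Y_i$ are i.i.d., non-negative, and a.s.\ bounded by $3$, and their natural scale is $n_0^{-1/3}$. The Irwin--Hall distribution gives $\Pr(Z_{11} \le t) = t^3/6$ for $t \in [0, 1]$; taking the $n_0$-fold independent minimum and using $1 - x \le e^{-x}$ yields
\[
\Pr(Y_1 > t) \le (1 - t^3/6)^{n_0} \le e^{-n_0 t^3/6} \qquad (t \in [0, 1]),
\]
together with the trivial tail bound $\Pr(Y_1 > t) \le (5/6)^{n_0}$ for $t \in [1, 3]$.

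Setting $\lambda := n_0^{1/3}$, Markov's inequality gives
\[
\Pr\!\bigl(S \ge b\, n_0^{-1/3}\bigr) \le e^{-b}\,\E\bigl[e^{\lambda Y_1}\bigr]^{k_0},
\]
so the claim reduces to a uniform MGF bound $\E[e^{\lambda Y_1}] \le e^{A}$ for some absolute constant $A \le 19.2$: with that, the right-hand side is at most $e^{-b + A k_0} \le e^{-b + Ab/20} \le e^{-b/25}$, using $b \ge 20 k_0$ together with $1 - 19.2/20 = 1/25$.

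To bound the MGF I would write $\E[e^{\lambda Y_1}] = 1 + \lambda \int_0^3 e^{\lambda t}\,\Pr(Y_1 > t)\,dt$, split the integral at $t = 1$, and substitute $u = \lambda t$ in the main piece to get
\[
\lambda \int_0^1 e^{\lambda t} e^{-n_0 t^3/6}\,dt \;=\; \int_0^\lambda e^{u - u^3/6}\,du \;\le\; C \;:=\; \int_0^\infty e^{u - u^3/6}\,du,
\]
an absolute constant, since the integrand peaks at $e^{2\sqrt{2}/3}$ when $u = \sqrt{2}$ and decays super-exponentially; a crude split yields $C \le 10$. The tail piece is bounded by $e^{3\lambda - n_0 \log(6/5)}$, which is $\le 1$ once $n_0 \ge 67$. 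For the finitely many smaller $n_0$, the trivial deterministic bound $\lambda Y_1 \le 3 n_0^{1/3}$ gives $\E[e^{\lambda Y_1}] \le e^{3 n_0^{1/3}} \le e^{19.2}$ as long as $n_0 \le 260$; these two regimes overlap and together cover all $n_0 \ge 1$. The only real obstacle is this finite arithmetic check, and it has comfortable slack: the large-$n_0$ MGF is bounded by roughly $e^{2.5}$, far below the $e^{19.2}$ demanded by the deliberately loose constants in the claim.
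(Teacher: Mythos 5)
Your proof is correct, but it takes a genuinely different route from the paper's. You run a standard exponential Markov argument: set $\lambda = n_0^{1/3}$, bound the MGF $\E[e^{\lambda Y_1}]$ uniformly in $n_0$ via the tail bound $\Pr(Y_1>t)\le e^{-n_0 t^3/6}$ on $[0,1]$ plus a crude tail, and then close with $b\ge 20k_0$. The paper instead uses a stochastic-domination argument: it observes that the conditional probability $\Pr(Z_{ij}<x+n_0^{-1/3}\mid Z_{ij}\ge x)$ is minimised at $x=0$ (where it equals $1/(6n_0)$), deduces that $\lceil n_0^{1/3}\min_j Z_{ij}\rceil$ is stochastically dominated by a $\operatorname{Geo}(0.15)$ variable, and then compares the sum of $k_0$ independent geometrics to a $\operatorname{Bin}(b,0.15)$ lower tail, which a single textbook Chernoff bound controls by $e^{-b/25}$. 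The paper's route is cleaner: it is scale-free in $n_0$, never needs the Irwin--Hall density beyond the small-$t$ cube growth, and avoids entirely the finite case analysis you needed (the overlap between the $n_0\ge 67$ regime where the MGF integral is controlled and the $n_0\le 260$ regime where the trivial bound $e^{3n_0^{1/3}}\le e^{19.2}$ suffices). Your approach is more mechanical and generalizes to other moment-generating-function computations, at the cost of that arithmetic bookkeeping; since the claim is stated with deliberately loose constants, both succeed with room to spare. One presentational nit: you assert $C:=\int_0^\infty e^{u-u^3/6}\,du\le 10$ and the finite-$n_0$ thresholds without displaying the verification, so if this were to be written up, those numerical checks should be spelled out (they do hold; $C\approx 5$).
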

\begin{scproof}
    Observe that $\Pr(Z_{ij}<x+n_0^{-1/3}\mid Z_{ij}\geq x)$ is minimised when $x=0$, at $1/(6n_0)$. It follows that $\Pr(\min_jZ_{ij}<x+n_0^{-1/3}\mid\min_jZ_{ij}\geq x)\geq 1-\exp(-1/6)>0.15$.

Consequently $\lceil n_0^{1/3}\min_jZ_{ij}\rceil$ is dominated by a $\operatorname{Geo}(0.15)$ random variable, and these are independent for different $i$. Hence the probability that the sum is too large is at most the probability that a $\operatorname{Bin}(b,0.15)$ random variable takes a value less than $k_0\leq 0.05b$, which by a standard Chernoff bound is less than $\exp(-b/25)$.
\end{scproof}

Any of the $k_0=|R_i| \leq |R|< k+o(k)$ vertices in $R_i$ has at least $n_0=|Q_i|-k_0=\Omega(s)$ vertices in $Q_i$ to choose from. 
Apply \cref{claim:Chernoff-sum-min-sum} with $b= 25tk$ for some $t\geq 1$. Then with probability at least $1-e^{-tk}$, the greedy algorithm matches all vertices in $R_i$ with vertices in $Q_i$ with total cost at most $C_1k/s^{1/3}$ for some constant $C_1=C_1(t)$. 

After the vertices in $R$ have been matched with vertices in $Q$ in this way, $k_0=|Q|-|R|\leq s$ vertices in $Q$ remain unmatched, and each of them have at least $n_0=|G_0|-|Q|=\Omega(n)$ vertices in $G_0$ to choose between.
Apply \cref{claim:Chernoff-sum-min-sum}, now with $b= 25ts$. Then with probability at least $1-e^{-tk}$, the cost of this matching is at most $C_2s/n^{1/3}$ for some constant $C_2=C_2(t)$.

The total cost of the greedy matchings is then at most $4C_1k/s^{1/3}+C_2s/n^{1/3}$ with probability at least $1-5e^{-tk}$. Let
$s=k^{3/4}n^{1/4}$, making both terms above $\Theta(k^{3/4}/n^{1/12})$ (this is close to optimal).
The lemma follows by picking $t$ sufficiently large compared to $r$. 
\end{scproof}

\begin{remark}
In order to try to apply this proof method to spanning spheres in dimension $d\geq 3$, we would need something similar to the planar separator theorem (\cref{lemma:planarsep}) in higher dimension.
If we had such a theorem, then a weaker lower bound on $M(\Sd_n)$ would suffice to prove that this random variable is sharply concentrated. Such a separator theorem might also help approach Gromov’s aforementioned question~\cite{gromov-spaces-questions} with a divide-and-conquer strategy.

On the other hand, recall that the planar separator theorem was just used
in order to gain access to a sufficient number (at least $k$) of free vertices. We could instead have considered the complete hypergraph $K_n^{(d+1)}$ on $n$ vertices as a subgraph of the complete hypergraph $K_{n+k}^{(d+1)}$ on $n+k$ vertices, and used these additional $k$ vertices to perform the matching of step 2. Instead of inequalty (\ref{ineq:patch}), this would lead to an inequality relating $M(\Sd_{n+k})$ to the quantile functions of both $M(\Sd_{n})$ and $M(\Sd_{n+k})$.
Although heuristically $M(\Sd_{n+k})$ should only be slightly larger than $M(\Sd_{n})$, we neither know the size of their (typical) difference, nor even whether $\E M(\Sd_{n+k})$ is indeed larger than $\E M(\Sd_{n})$.
\end{remark}

\section{Concentration for \texorpdfstring{$d\geq 3$}{d at least 3}}
\subsection{Concentration assuming moderate growth}
\label{section:concentration}
We will prove the following theorem in the more general setting of an arbitrary family $\F$ of subhypergraphs; we denote the median of $M(\F)$ by $\mu$.
\begin{theorem}
\label{thm:conc-general}
Let $\F$ be a family of subhypergraphs of $K_n^{(d+1)}$ which for any $m\geq tn$ contains at most $K^m m^{\beta m}$ subgraphs with $m$ hyperedges, for some $t,K>0$ and $\beta<1/2$, and which contains no subgraph with fewer than $tn$ hyperedges.

Then $M(\F)$ is sharply concentrated: there exists $\delta>0$ such that \[\Pr(|M(\F)-\mu|>\mu^{1-\delta})\leq \exp(-n^{\delta})\] for $n$ sufficiently large.
\end{theorem}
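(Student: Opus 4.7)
The plan is to combine the first-moment lower bound of \cref{LB:general-graph-family} with Talagrand's convex-distance inequality applied to a truncated version of $\F$. First, \cref{LB:general-graph-family} with $m_0=tn$ yields $\mu\geq c_1 n^{1-\beta}$ for some $c_1>0$; since $\beta<1/2$, this is $\Omega(n^{1/2+\eta_0})$ for some $\eta_0>0$. Moreover, applying Hoeffding's inequality to $W_{S_0}$ for any fixed $S_0\in\F$ supplies a crude polynomial upper bound $\mu=O(|E(S_0)|)$, so $\mu$ lies in a polynomial range that we may treat as known.

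Next I would truncate: set $r:=C_0\mu^{1/(1-\beta)}$ for a large constant $C_0$, and $\F_r:=\{S\in\F:|E(S)|\leq r\}$. A direct first-moment computation bounds the expected number of $S\in\F$ with $|E(S)|\geq r$ and $W_S\leq 2\mu$ by
\[
\sum_{m\geq r}K^m m^{\beta m}\!\left(\tfrac{2\mu e}{m}\right)^{\!m}=\sum_{m\geq r}\bigl(2K\mu e\,m^{\beta-1}\bigr)^{m},
\]
whose terms are geometrically small for $m\geq r$ once $C_0$ is chosen so that $2Ke\,C_0^{\beta-1}<1/2$. Together with the Hoeffding-type bound $M(\F)\leq 2\mu$ w.h.p., this gives $M(\F)=M(\F_r)$ w.h.p., whence the medians $\mu$ and $\mu_r$ agree up to $o(\mu^{1-\delta})$ for any $\delta>0$.

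Now apply Talagrand's convex-distance inequality to $M(\F_r)$. This functional is $1$-Lipschitz in each weight $W_e\in[0,1]$, and for every realisation $x$ the optimal structure $S^*_x\in\F_r$ provides a certificate of size $\leq r$: any $y$ agreeing with $x$ on $E(S^*_x)$ satisfies $M(\F_r)(y)\leq W_{S^*_x}(y)=W_{S^*_x}(x)=M(\F_r)(x)$. Running the standard certifiable-function argument, and noting that the same certificate $E(S^*_x)$ also controls the Talagrand distance from $\{M(\F_r)\leq \mu_r-s\}$ to $\{M(\F_r)\geq\mu_r\}$ (for $y$ in the latter set, $W_{S^*_x}(y)\geq \mu_r$ while $W_{S^*_x}(x)\leq\mu_r-s$, forcing at least $s$ disagreements inside $E(S^*_x)$), yields the two-sided bound
\[
\Pr\bigl(|M(\F_r)-\mu_r|>s\bigr)\leq C'\exp\!\bigl(-s^2/(4r)\bigr).
\]
Setting $s=\mu^{1-\delta}$, the exponent equals $\Theta(\mu^{2-2\delta-1/(1-\beta)})\geq c\,n^{(1-\beta)(2-2\delta)-1}$; since $\beta<1/2$, the quantity $2(1-\beta)-1$ is strictly positive, so for $\delta>0$ sufficiently small we have $(1-\beta)(2-2\delta)-1\geq\delta$, and the exponent is at least $n^\delta$. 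Transferring back from $M(\F_r)$ to $M(\F)$ then completes the proof.

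The main obstacle is the delicate balance between the truncation threshold $r\asymp\mu^{1/(1-\beta)}$ and the target fluctuation scale $\mu^{1-\delta}$. Talagrand's inequality yields concentration on scale $\sqrt{r}$, and the requirement $\sqrt r\ll\mu^{1-\delta}$ reduces precisely to $2(1-\beta)>1$, i.e.\ $\beta<1/2$. In particular, the hypothesis $\beta<1/2$ is used in essentially one place, but in a non-negotiable way: any improvement beyond the scale $\mu^{1/2}$ would require either better entropy bounds on $\F$ or a separator-style argument analogous to the one used in \cref{sec:2-conc} for $d=2$.
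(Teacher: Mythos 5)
Your approach (truncate $\F$ to $\F_r$ consisting of subgraphs of at most $r$ edges with $r\asymp\mu^{1/(1-\beta)}$, kill the tail $\F\setminus\F_r$ by a first moment bound, and apply Talagrand's certifiable-function inequality to $M(\F_r)$) is structurally the same as the paper's, which also splits $\F$ into small and large members and applies a Talagrand-type concentration inequality to the truncated family. However, there is a genuine gap in your argument for showing $M(\F)=M(\F_r)$ w.h.p.

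You invoke ``the Hoeffding-type bound $M(\F)\leq 2\mu$ w.h.p.'' as if it were a standalone fact, but Hoeffding applied to $W_{S_0}$ for a fixed $S_0\in\F$ only shows that $W_{S_0}$ concentrates around $|E(S_0)|/2$; to conclude $W_{S_0}\leq 2\mu$ w.h.p.\ you would need $|E(S_0)|\leq 4\mu$ (up to lower-order terms), and there is nothing in the hypothesis of the theorem that gives you a member of $\F$ with that few edges. You do note that $\mu=O(|E(S_0)|)$, but this is the wrong direction: the minimum over many structures can be far smaller than the typical cost of any single one. This is exactly why the paper invokes \cref{lemma:uppertail} (an upper-tail bound $\Pr(M(\F)>t\mu)\leq 2^{1-t}$ around the median, a nontrivial fact imported from~\cite{patch-ineq}) instead of Hoeffding: without such a lemma, the definition of median only gives $\Pr(M(\F)>2\mu)\leq 1/2$, which is nowhere near the $\exp(-n^\delta)$ failure probability you need. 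The argument can be repaired without \cref{lemma:uppertail} by reordering: apply Talagrand to $M(\F_r)$ first to get concentration on scale $\sqrt{r}$ around $\mu_r$; then observe that on the event $\{M(\F)\leq\mu\}$ (probability $\geq 1/2$) intersected with the first-moment event, the two minimizers coincide and $M(\F_r)\leq\mu$, which forces $\mu_r\leq\mu+O(\sqrt{r})$ via the Talagrand lower tail; combined with the trivial $M(\F)\leq M(\F_r)$ this yields the needed upper bound on $M(\F)$ w.h.p. As currently written, though, the step is circular: the relationship ``$\mu$ and $\mu_r$ agree up to $o(\mu^{1-\delta})$'' is derived from ``$M(\F)=M(\F_r)$ w.h.p.'', which itself rests on the unjustified $M(\F)\leq 2\mu$ claim.

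Apart from this, the rest of the proposal is correct: the first-moment lower bound $\mu\geq c_1 n^{1-\beta}$, the estimate on the tail sum with $2Ke\,C_0^{\beta-1}<1/2$, the certifiability and Lipschitz argument, and the scale comparison $\sqrt{r}\ll\mu^{1-\delta}$ that pins down precisely where $\beta<1/2$ enters are all sound, and your final remark about the origin of the threshold $\beta<1/2$ matches the paper's perspective.
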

Part (iii) of \cref{thm:dsphere} is a special case of \cref{thm:conc-general}.
To prove \cref{thm:conc-general}, we will need the following simple bound from \cite{patch-ineq}.
\begin{lemma}[Special case with $q=1$ of {\cite[Proposition 7.1]{patch-ineq}}]
\label{lemma:uppertail}For any $t\geq 0$,
\[
    \Pr(M(\F)>t\mu )\leq 2^{1-t}.
\]
\end{lemma}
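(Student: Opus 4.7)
The plan is to use a coupling argument that reduces the upper tail of $M(\F)$ at level $k\mu$ to the event that $k$ independent copies of $M(\F)$ all exceed $\mu$. I would begin by reducing to the case of positive integer $t$. For $t\in[0,1)$ the bound $2^{1-t}\geq 1$ is vacuous. For real $t\geq 1$, since $c\mapsto \Pr(M(\F)>c)$ is non-increasing and $\lfloor t\rfloor \geq t-1$, it suffices to prove $\Pr(M(\F)>k\mu)\leq 2^{-k}$ for every integer $k\geq 1$, because then $\Pr(M(\F)>t\mu)\leq 2^{-\lfloor t\rfloor}\leq 2\cdot 2^{-t}=2^{1-t}$.

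Fix such a $k$ and set up the following coupling. Let $(W_e^{(i)})_{e\in E(K_n^{(d+1)}),\,i\in[k]}$ be mutually independent $U(0,1)$ random variables, and put $\tilde W_e:=\min_{i\in[k]} W_e^{(i)}$ and $W_e:=1-(1-\tilde W_e)^k$. A change-of-variables check (the CDF of $\tilde W_e$ is $1-(1-y)^k$, so applying $y\mapsto 1-(1-y)^k$ yields a $U(0,1)$ variable) shows that the $W_e$ are independent $U(0,1)$, so $(W_e)_e$ is a valid instance of our model. The crucial pointwise inequality is $W_e\leq k\tilde W_e$; it follows by applying the factorisation $1-y^k=(1-y)(1+y+\cdots+y^{k-1})\leq k(1-y)$ with $y=1-\tilde W_e\in[0,1]$.

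Summing over the edges of any $S\in\F$ yields $W_S\leq k\tilde W_S$, and since $\tilde W_e\leq W_e^{(i)}$ for every $i$, we obtain $M(\F;W)\leq k\,M(\F;W^{(i)})$ for each $i$, hence $M(\F;W)\leq k\min_i M(\F;W^{(i)})$, where $M(\F;W')$ denotes the minimum-cost element of $\F$ under weights $W'$. Because the families $(W_e^{(i)})_e$ are mutually independent across $i$, the variables $M(\F;W^{(1)}),\dots,M(\F;W^{(k)})$ are i.i.d.\ copies of $M(\F)$, each with median $\mu$. Therefore
\[
\Pr\bigl(M(\F)>k\mu\bigr)\leq \Pr\Bigl(\min_i M(\F;W^{(i)})>\mu\Bigr)=\prod_{i=1}^{k}\Pr\bigl(M(\F;W^{(i)})>\mu\bigr)\leq 2^{-k},
\]
which, combined with the reduction step, closes the proof. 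The only step requiring ingenuity is guessing the \emph{quantile-matching} coupling $W_e=1-(1-\tilde W_e)^k$; once one has it, everything else is elementary calculus together with independence across the $k$ copies.
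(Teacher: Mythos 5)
Your proof is correct. The paper itself does not prove this lemma but cites it as a special case of \cite[Proposition 7.1]{patch-ineq}, so there is no in-paper argument to compare against; your write-up supplies a clean, self-contained derivation. The reduction to integer $t$ is sound (for $t<1$ the bound is vacuous, and for $t\geq 1$, monotonicity of the tail plus $\lfloor t\rfloor\geq t-1$ does the rest). The heart of the argument — the quantile-matching coupling $W_e=1-(1-\tilde W_e)^k$ with $\tilde W_e=\min_i W_e^{(i)}$ — correctly produces $U(0,1)$ marginals, and the pointwise bound $W_e\leq k\tilde W_e\leq kW_e^{(i)}$ follows from the geometric-sum factorisation exactly as you state. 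Combined with independence of $M(\F;W^{(1)}),\dots,M(\F;W^{(k)})$ and $\Pr(M(\F)>\mu)\leq 1/2$ (which holds since $\mu$ is the median and $M(\F)$ has a continuous distribution), this gives $\Pr(M(\F)>k\mu)\leq 2^{-k}$. This is essentially the classical `$k$-independent-copies' trick for upper tails in terms of the median, adapted to uniform weights via the quantile transform; it is elementary and robust, and plausibly the same strategy used in the cited reference, though I cannot confirm that here.
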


\begin{scproof}[Proof of \cref{thm:conc-general}]
We will show this for the explicit value $\delta=(1/2-\beta)/4>0$.
Split $\F$ into two families: $\F'$ consisting of those sets $F\in \F$ of size at most some $m_1\gg n$ (to be determined later), and $\F'':=\F-\F'$ consisting of the larger $F$'s.
Note that $M(\F)=\min(M(\F'),M(\F''))$. 
We will show that $M(\F)\leq \mu^{1+4\delta}\ll M(\F'')$ w.h.p., from which it follows that $M(\F)=M(\F')$ w.h.p. We will then show that $M(\F')$ (and hence $M(\F)$) is sharply concentrated.

Applying the first moment bound \cref{LB:general-graph-family} to $\F$ gives $\mu=\Omega((nt)^{1-\beta})=\Omega(n^{1/2+4\delta})$.
Similarly, applying \cref{LB:general-graph-family} to $\F''$ gives $M(\F'')=\Omega(m_1^{1/2+4\delta})$.

The bound we will prove on the fluctuations of $M(\F')$ around its median will be roughly $\sqrt{m_1}$. We therefore set $m_1:=\mu^{2-4\delta}$. Observe that, since $0<4\delta<1/2$, we have $(2-4\delta)(1/2+4\delta)>1+4\delta$.

Using the first moment bound on $M(\F'')$ above, we obtain \[M(\F'')=\Omega(\mu^{(2-4\delta)(1/2+4\delta)})\gg \mu^{1+4\delta},\] and similarly 
$m_1=\Omega(n^{(1/2+4\delta)(2-4\delta)})\gg n^{1+4\delta}$.

To upper bound $M(\F)$, we use \cref{lemma:uppertail} with $t=\mu^{4\delta}\gg 1$, giving 
\[
\Pr(M(\F)\geq \mu^{1+4\delta})\leq 2^{1-\mu^{4\delta}}=o(1).
\]

We now focus on $M(\F')$, and show that it is sharply concentrated around its median. $\F'$ is trivially $(k,k,\eps)$-patchable for any $k$ and $\eps$: Simply put back the $k$ elements that were removed, at cost at most $k$.\footnote{For this trivial patchability condition, \Cref{thm:patchability-ineq} reduces to the Talagrand inequality, in particular the `certifiability' corollary as found in~\cite{probmeth}. It would have been slightly easier to apply this inequality directly instead of using \Cref{thm:patchability-ineq}, however we use the latter to avoid stating the inequality and definitions from~\cite{probmeth}.}
We apply \cref{thm:patchability-ineq} for some $\eps=\eps(n)$ tending to zero not too fast ($\log(\eps^{-1})\leq \mu^{2\delta}$, say), and $\lambda=k:=\sqrt{8\log(\eps^{-1})m_1}$. 
We then have $\log(\eps^{-1})m_1\leq \mu^{2-2\delta}$ and hence $\lambda\leq \mu^{1-\delta}$.
Letting $\mu'$ be the median of $M(\F')$, \cref{thm:patchability-ineq} then gives us that
\[
\Pr\left(\big|M(\F')-\mu'\big|\geq 3\mu^{1-\delta}\right) \leq 3\eps.
\]
Since $M(\F)=M(\F')$ w.h.p., this inequality also holds for $M(\F)$ if we add an extra $o(1)$ term to the right-hand side. 
The right-hand side is strictly less than $1/2$, so we must furthermore have  $|\mu'-\mu|\leq 3\mu^{1-\delta}$. By the triangle inequality, $\big|M(\F)-\mu\big|\leq 6\mu^{1-\delta}$ with high probability.
\end{scproof}

\subsection{Small subfamilies and locally constructible spheres} \label{section:lc-spheres}
In this section we prove \cref{LC-sphere-scaling}, but before doing so let us discuss an important family of spheres to which it applies, namely the \emph{locally constructible spheres}. (The reader may skip this discussion and proceed directly to the proof.)
\medskip

Tutte's proof of \cref{tutte-enum} involves a clever and intricate use of generating functions. It is however fairly easy to see that $B_{n,2n-4}^{(2)}\leq C^n$ for \emph{some} constant $C$, and we include a sketch of that argument here.

\begin{scproof}[Proof sketch]
Any $2$-sphere can be constructed in a two-stage process. First, glue together $2n-4$ triangular faces along edges in such a way that they form a disc without interior vertices. That is, the face-dual graph is a tree. This forms a triangulation of a $(2n-2)$-gon, and the number of such triangulations is given by the Catalan number $C_{2n-4}=o(4^n)$. Next, pair up the edges along the boundary with a planar perfect matching, and glue matched pairs of edges together. The number of planar matchings can also be counted by the Catalan numbers, so again grow exponentially with $n$. We can obtain any given isomorphism class of $2$-spheres (in many different ways, as we have counted them), since we may base the first stage on any spanning tree of the dual graph. 
\end{scproof}

In dimension $d=3$, the argument above fails in the second stage. After constructing a $3$-ball, we are no longer constrained to \emph{planar} matchings when pairing up faces on its boundary.

The way the edges of the disc are glued together in dimension $d=2$ can also be thought of as a sequence of `local' glueings: First glue together two adjacent edges. This shrinks the length of the boundary by two, and causes two previously non-neighbouring edges to become neighbours. Repeat this process until the boundary is eliminated.

Generalizing this notion of local glueings to higher dimensions, 
Durhuus \& J\'onsson \cite{LC-spheres-definition} introduced the class of \emph{locally constructible} manifolds (see \cref{def:lc-phere} below).
They showed that there are only exponentially many locally constructible manifolds with $m$ facets. As sketched above, all $2$-spheres are locally constructible. Durhuus \& J\'onsson posed the question of whether this is also true for $3$-spheres, but it was answered in the negative by Benedetti \& Ziegler~\cite{LC-spheres-characterization}.
For more background on locally constructible spheres, as well as how they relate to shellable and collapsible simplicial complexes, we refer the reader to \cite{LC-spheres-characterization}.

\begin{definition}
\label{def:lc-phere}
The \textit{locally constructible} (LC) $d$-balls are those simplicial $d$-balls that can be constructed by a sequence of the following two moves, starting from a single $(d+1)$-simplex.
\begin{enumerate}
\item Glue a new $(d+1)$-simplex along a $d$-simplex to a $d$-simplex on the boundary.
\item Glue two $d$-simplices together, provided that they both lie on the boundary and already overlap in a $(d-1)$-simplex.
\end{enumerate}
A \textit{locally constructible sphere} is the boundary of a locally constructible ball.
\end{definition}

We now show that $S^*_{n,d}$ is a locally constructible sphere, and so \cref{LC-sphere-scaling} applies to the locally constructible spheres.
\begin{scproof}[Proof of \cref{its-a-sphere}]
Let $\mathrm{Cone}=\mathrm{Cone}_z$ be the cone operator that takes a simplicial complex $\mathcal{K}$ and adds to it a new vertex $z$ and a simplex $K \cup \{z\}$ for every $K\in \mathcal{K}$.

Note that $B(C)=\mathrm{Cone}^d(C)=\mathrm{Cone}_{v_1}(\mathrm{Cone}_{v_2}(\cdots \mathrm{Cone}_{v_d}(C)\ldots ))$.
$\mathrm{Cone}(C)$ is homeomorphic to the $2$-ball (that is, the disc), and is clearly LC. Note that the cone over a $k$-ball is homeomorphic to the $(k+1)$-ball, and furthermore that the cone of an LC $k$-ball is also LC (by considering coning every step of the construction). Hence $\mathrm{Cone}^d(C)$ is homeomorphic to the $(d+1)$-ball, its boundary is homeomorphic to the $d$-sphere, and both are LC. 

If $y,y'$ are the two neighbours of some vertex $x$ in $C$, then $\{x,v_1,\ldots,v_d\}$ lies in both $\partial f_{xy}$ and $\partial f_{xy'}$, and hence they cancel out in $S_C=\partial B(C)$.
On the other hand, any $d$-simplex containing both $x$ and $y$ (with $xy\in E(C)$) occurs only once, namely in $\partial f_{xy}$. Hence the facets are
\[
\{f_{xy}-\{v_i\}: xy\in E(C), 1\leq i \leq d\},
\]
and there are $|E(C)|\cdot d = (n-d)\cdot d$ such facets.
\end{scproof}

Benedetti \& Pavelka \cite{BP21} extended \cref{def:lc-phere} to give the class of $t$-LC spheres, where the requirement to overlap in a $(d-1)$-simplex is relaxed to a $(d-t)$-simplex. They showed that the larger class of $2$-LC spheres is also exponentially bounded, and thus \cref{LC-sphere-scaling} also applies to this class. However, even for $d=3$ (where these are also known as Mogami spheres), not all spheres are $2$-LC \cite{Ben17}.

\begin{scproof}[Proof of \cref{LC-sphere-scaling}]
The argument is similar to that of the proof of \cref{thm:conc-general}. First, split $\mathcal{P}$ into two families: $\mathcal{P}'$, consisting of all spheres in $\mathcal{P}$ with at most $m_1:=(d+\eps)n$ facets, and $\mathcal{P}'':=\mathcal{P}-\mathcal{P}'$.
For the upper bound, the sphere $S_C$ in the proof of \cref{UB:polar-sphere} is in $\mathcal{P}$ by assumption.
Furthermore, it has $d(n-d)<m_1$ facets, and hence it also lies in $\mathcal{P}'$. Thus $M(\mathcal{P})\leq M(\mathcal{P}')\leq Cn^{1-\frac{1}{d}}$ with high probability, for some $C$. 

Next, there are at most $2^{O(m)} n!$ spheres in $\mathcal{P}$ with $m$ facets, also by assumption.
For $m$ in the range $dn\leq m\leq m_0$, we have  $n!\leq m^{m/d}$, while for $m\geq m_0$, we have $n!\leq m^{m/(d+\eps)}$.
Using \cref{LB:general-graph-family} on  $\mathcal{P}'$ and $\mathcal{P}''$ with $\beta=1/d$ and $\beta=1/(d+\eps)$ respectively therefore yields $M(\mathcal{P}')\geq c'n^{1-\frac{1}{d}}$ and $M(\mathcal{P}'')\geq c''n^{1-\frac{1}{d+\eps}}$, with high probability, for some $c',c''>0$. Together with the upper bound on $M(\mathcal{P}')$ above, we deduce $M(\mathcal{P}'')\gg M(\mathcal{P}')$ w.h.p., and hence $M(\mathcal{P})=M(\mathcal{P}')$ w.h.p.

The proof of concentration then proceeds just as in \cref{thm:conc-general} by using the Talagrand inequality to show that $M(\mathcal{P}')$ is sharply concentrated.
\end{scproof}

\medskip
The aforementioned question of Durhuus \& J\'onsson would have implied the following, if the answer had been positive, using the method of \cref{sec:bounds} or \ref{section:lowerbound}.

\begin{conj}
\label{conj:3sphere} There is a constant $c>0$ such that for all sufficiently large $n$,
\[
\mu^{(3)}_n \geq  cn^{2/3}.\]
\end{conj}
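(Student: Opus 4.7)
The plan is to sharpen the entropy estimate feeding into the first moment argument of \cref{LB:general-graph-family}. Concretely, it would suffice to upgrade the bound on the number $A^{(3)}_{n,m}$ of labelled 3-spheres on $n$ vertices with $m$ facets from the current $\beta_3\leq 8/21$ of \cref{entropy-bound} to $A^{(3)}_{n,m}\leq b^m m^{m/3}$ for some constant $b$, i.e.\ to $\beta_3\leq 1/3$. Granted such a bound, since every 3-sphere has at least $m_0=3n-10$ facets by \cref{remark:LB-d-sphere}, \cref{LB:general-graph-family} applied to $\F=\Sthree_n$ with $\beta=1/3$ yields $M(\Sthree_n)\geq r\,m_0^{2/3}=\Omega(n^{2/3})$ with probability at least $1-e^{-\Omega(n)}$, and this lower bound then transfers immediately to the median $\mu^{(3)}_n$ for all sufficiently large $n$.

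The natural route to $\beta_3\leq 1/3$, flagged by the authors, goes through a positive answer to Durhuus \& J\'onsson's question. If every 3-sphere were locally constructible, then \cite{LC-spheres-definition} gives $B^{(3)}_{n,m}\leq C^m$, hence $A^{(3)}_{n,m}\leq n!\,C^m$. Using $n\leq m/3+O(1)$ one bounds $n!\leq n^n\leq (m/3)^n e^{O(1)}\leq m^{m/3}\cdot e^{O(\log m)}$, and the subexponential factor is absorbed into $(2C)^m$ to produce the required form $b^m m^{m/3}$. Alternatively, one may bypass \cref{LB:general-graph-family} and apply \cref{claim:uniforms-sum} directly: letting $X_m$ be the number of 3-spheres of cost at most $L$ with exactly $m$ facets, one obtains $\E X_m\leq n!\,C^m(Le/m)^m \leq \bigl(CLe/(3^{1/3}m^{2/3})\bigr)^m\cdot\mathrm{poly}(m)$, which is summably small over $m\geq m_0$ once $L=cn^{2/3}$ with $c$ sufficiently small, giving the conjectured bound by Markov.

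The principal obstacle is that Durhuus \& J\'onsson's question was in fact answered in the negative by Benedetti \& Ziegler \cite{LC-spheres-characterization}: not every 3-sphere is LC, and even the larger Mogami (2-LC) class is incomplete \cite{Ben17}. Hence the conditional argument above cannot be invoked directly, and an unconditional proof of \cref{conj:3sphere} appears to require a genuinely new combinatorial entropy estimate on $A^{(3)}_{n,m}$, sharpening \cref{entropy-bound} all the way down to $\beta_3=1/3$. A plausible alternative route would be to identify a symmetric, exponentially bounded subfamily $\mathcal{P}\subseteq\Sthree_n$ in the sense of \cref{LC-sphere-scaling} which, with high probability, contains a near-cost-optimal sphere; then \cref{LC-sphere-scaling} would directly furnish the matching $\Omega(n^{2/3})$ lower bound without needing to bound the entropy of the entire family.
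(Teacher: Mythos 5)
You have correctly identified that \cref{conj:3sphere} is, in fact, a conjecture in the paper rather than a proved theorem, and your analysis of its logical status matches the paper's own brief remark. The paper states only that a positive answer to Durhuus \& J\'onsson's question ``would have implied'' the bound $\mu^{(3)}_n\geq cn^{2/3}$ via the methods of \cref{sec:bounds} or \cref{section:lowerbound}; since Benedetti \& Ziegler answered that question negatively, the paper offers no proof, and neither can you. Your conditional reduction is correct and tracks the intended route exactly: if $A^{(3)}_{n,m}\leq b^m m^{m/3}$, i.e.\ $\beta_3\leq 1/3$, then \cref{LB:general-graph-family} with $m_0=3n-10$ immediately gives $M(\Sthree_n)=\Omega(n^{2/3})$ with exponentially high probability, and hence the same for the median. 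Your closing observation --- that one could alternatively settle the conjecture by exhibiting a symmetric, exponentially bounded subfamily $\mathcal{P}\subseteq\Sthree_n$ that w.h.p.\ contains a near-optimal sphere, and then invoking \cref{LC-sphere-scaling} --- is a reasonable reframing, but it is not genuinely easier: establishing that the minimal spanning $3$-sphere is (close in cost to one that is) LC, $2$-LC, or otherwise ``tame'' is precisely the kind of structural control that is currently out of reach, and is roughly dual to sharpening the entropy bound. In short, you have correctly diagnosed the gap rather than filled it, and that diagnosis agrees with the paper.
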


Here $\mu^{(3)}_n$ denotes the median of $M(\Sthree)$ as in \cref{thm:3sphere}.

\section{Outlook} \label{Outlook}

We conjecture that the upper bound on 
$\mu^{(d)}_n$ provided by \cref{thm:dsphere} (i) is sharp: 
\begin{conj}
\label{conj c} 
For every $d\geq 2$ there is a constant $c_d>0$ such that 
\[
\mu^{(d)}_n = c_d n^{1-1/d} + o(n^{1-1/d}).\]
\end{conj}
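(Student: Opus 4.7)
The plan is to prove matching lower and upper bounds of order $n^{1-1/d}$ with the same leading constant $c_d$, refining the arguments used for \cref{thm:dsphere}(i,ii) and \cref{LC-sphere-scaling}.

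First I would show that, with high probability, the optimal spanning sphere has $(1+o(1))dn$ facets. By Kalai's Lower Bound Theorem (see \cref{remark:LB-d-sphere}) every $d$-sphere has at least $dn-O(1)$ facets, so this would mean the minimiser is as small as topologically possible. The idea is to group spheres by their facet count $m$ and apply \cref{claim:uniforms-sum}: for each $m$, the expected number of spheres of cost at most $L=Cn^{1-1/d}$ is at most $A^{(d)}_{n,m}\cdot (Le/m)^m$. If $A^{(d)}_{n,m}\leq 2^{O(m)}m^{m/d}$ uniformly in $m$ (which would follow from a positive answer to Gromov's question on isomorphism classes, since $n!\leq m^{m/d}$ whenever $m\geq dn$), then this quantity is maximised at $m=\Theta(n)$ and decays super-exponentially for $m\gg n$, so a union bound over $m$ finishes the argument. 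The split would mirror the one used in the proof of \cref{LC-sphere-scaling} into $\mathcal{P}'$ and $\mathcal{P}''$.

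Second, I would analyse the subfamily of spheres with $m=(1+o(1))dn$ facets directly. By the equality case of the Lower Bound Theorem these are essentially the \emph{stacked} $d$-spheres, whose dual is a tree of $(d+1)$-simplices glued along $d$-faces. For the particular sub-subfamily $\Theta_d$ (cycles of bipyramids), \cref{UB:polar-sphere} already reduces the problem to a random TSP on $K_{n-d}$ with i.i.d.\ pseudo-dimension-$d$ edge costs, giving the explicit asymptotic value $d!\,\beta_{\mathrm{TSP}}(d)\,n^{1-1/d}$ via W\"astlund~\cite{wastlund2009replica}. To obtain matching bounds for all stacked spheres, one would hope to argue, via local exchange arguments on the dual tree, that no tree-dual configuration can significantly beat the cyclic one; alternatively, one could define $c_d$ intrinsically as the limiting infimum of a continuum ``random stacked-sphere'' functional in the spirit of Aldous's objective method, and show that both the upper and lower constructions converge to this value.

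The main obstacle is the first step: a bound of the form $A^{(d)}_{n,m}\leq 2^{O(m)}m^{m/d}$ is exactly the regime where Gromov's question lies, and current bounds (including \cref{entropy-bound} for $d=3$) are too weak. A secondary and genuinely new obstacle is pinning down the value of $c_d$: even restricted to stacked spheres the number of combinatorial types grows exponentially, so extending W\"astlund's replica analysis from Hamilton cycles to general stacked skeletons would require substantial new ideas. The $d=2$ instance is a natural test case, since there the exponent is already established in \cref{thm:2sphere}(i) and only the multiplicative factor of $e/2$ between the lower and upper constants remains to be eliminated --- perhaps by a second-moment refinement of the Luria--Tessler threshold argument.
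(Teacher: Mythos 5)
You are attempting to prove a statement that the paper explicitly leaves open: Conjecture~\ref{conj c} is a \emph{conjecture}, not a theorem, and the authors remark immediately after stating it that ``this is open even for $d=2$.'' The paper proves the exponent $1-1/d$ only as an upper bound unconditionally (\cref{thm:dsphere}(i)), and as a matching two-sided bound $\Theta(n^{1-1/d})$ only conditionally on enumeration hypotheses (\cref{LC-sphere-scaling}, which assumes at most $K^m$ isomorphism classes with $m$ facets); determining the constant $c_d$, and for $d\geq 3$ even the exponent, is not achieved.

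Your proposal is an honest plan rather than a proof, and you correctly identify the principal obstacle (an enumeration bound of the form $A^{(d)}_{n,m}\leq 2^{O(m)}m^{m/d}$, which lives in the territory of Gromov's question and is not available for $d\geq 3$). But there are two further gaps you should be aware of, beyond those you name. First, the claim that spheres with $(1+o(1))dn$ facets are ``essentially stacked'' is not justified: the equality case of the Lower Bound Theorem characterises spheres with \emph{exactly} the minimum number of facets, and says nothing about spheres whose facet count exceeds the minimum by $\Theta(n)$ --- such spheres form a vastly larger and less structured class. Second, even if one could restrict to stacked spheres, W\"astlund's analysis (as invoked in \cref{UB:polar-sphere}) applies to a TSP on a complete graph, i.e.\ to the specific subfamily $\Theta_d$ whose dual tree is a path; extending the replica/objective-method computation to arbitrary tree duals, and showing that the infimum over all stacked spheres has a well-defined constant matching the lower bound, would be genuinely new work, as you acknowledge. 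The $d=2$ instance would indeed be the natural test case, but your proposal to eliminate the factor $e/2$ by a second-moment refinement of Luria--Tessler is speculation, not an argument. In short: the proposal is a sensible reconstruction of the state of the art and of why the conjecture is open, but it does not constitute a proof, and the paper contains none to compare it against.
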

We remark that this is open even for $d=2$, in which case \cref{thm:2sphere} bounds the possible values of $c_2$ within the interval $[\alpha, \frac{e}{2}\alpha]$.

\medskip
As mentioned in the introduction, we can consider a more general model where the cost of a triangulation in $\Sd_n$ influences its probability of being selected via a Boltzmann distribution. To make this precise, let $\lambda=\lambda_{d,n}$ denote the Lebesque measure on the cube $Q:= [0,1]^{\binom{n}{d+1}}$, to be thought of as a random weighing of $K_n^{(d)}$, assigning independent $U(0,1)$-distributed costs to its hyperedges. For each triangulation $S\in \Sd_n$, and each weighing $w\in Q$,  define the \emph{energy} $H(S,w):= \sum_{e\in S} w(e)$, i.e.\ the total cost of $S$ with respect to $w$. Finally, choose a random sphere $\Rbn{d} \in \Sd_n$ with probability density 
\[f_\beta(S):= \frac{e^{-\beta H(S,w)}}{Z_\beta},\]
where $Z_\beta$ denotes the \emph{partition function} $\sum_{S\in \Sd_n} \int_Q  e^{-\beta H(S,w)} d\lambda(w)$, and $\beta\in [0,\infty)$ is the \emph{inverse temperature} parameter. Thus the probability that $\Rbn{d}$ coincides with a given $S\in \Sd_n$ is $\frac{1}{Z_{\beta}}\int_Q  e^{-\beta H(S,w)} d\lambda(w)$. 

All results and questions of this paper can be studied with $M(\Sd_n)$ replaced by \Rbn{d}. In particular, we can let $\mu^{(d)}_{n,\beta}$ be the median of \Rbn{d}, and adapt \Cref{conj c} as follows:
\begin{conj}
\label{conj c beta} 
For every $d\geq 2$ and $\beta\in [0,\infty)$, there are constants $e^{(d)}(\beta), c_{d,\beta}>0$ such that 
\[
\mu^{(d)}_{n,\beta} = c_{d,\beta} n^{e^{(d)}(\beta)} + o(n^{e^{(d)}(\beta)}).\]
\end{conj}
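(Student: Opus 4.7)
The plan is to analyze the partition function $Z_\beta$ and then combine a tilted first-moment argument with a Talagrand-style concentration inequality, paralleling the $\beta=\infty$ treatment in the rest of the paper. Since $H(S,w)=\sum_{e\in S}w_e$ depends only on the coordinates of $w$ inside $S$, a direct computation gives
$$
Z_\beta \;=\; \sum_{S \in \Sd_n} \int_Q e^{-\beta H(S,w)}\, d\lambda(w) \;=\; \sum_{m} A^{(d)}_{n,m}\, r(\beta)^m, \qquad r(\beta) := \frac{1-e^{-\beta}}{\beta},
$$
so the marginal law of $|\Rbn{d}|$ is proportional to $A^{(d)}_{n,m}\, r(\beta)^m$. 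Conditional on $\Rbn{d}=S$, the weights $\{w_e: e\in S\}$ are i.i.d.\ truncated exponentials on $[0,1]$ with rate $\beta$, of common mean
$$
\mu_1(\beta) \;=\; \frac{1}{\beta} - \frac{e^{-\beta}}{1-e^{-\beta}},
$$
extended continuously by $\mu_1(0)=1/2$.

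I expect \cref{conj c beta} to hold with $e^{(d)}(\beta)=1$ for every finite $\beta$, and with $c_{d,\beta} = \mu_1(\beta)\cdot \rho^{(d)}(\beta)$, where $\rho^{(d)}(\beta):=\lim_n m^*(\beta,n)/n$ and $m^*(\beta,n)$ is the maximizer of $m\mapsto A^{(d)}_{n,m}\,r(\beta)^m$. Since $r(\beta)$ is bounded away from $0$ for finite $\beta$, and every $S\in\Sd_n$ has $|S|=\Theta(n)$ by \cref{remark:LB-d-sphere}, one expects $m^*(\beta,n)=\Theta(n)$. For $d=2$ this is particularly clean: every $S\in\Stwo_n$ has exactly $|S|=2n-4$ facets, so $|\Rbn{2}|$ is deterministic, the cost is a sum of $2n-4$ i.i.d.\ truncated exponentials, and Hoeffding's inequality immediately gives $\mu^{(2)}_{n,\beta}= 2\mu_1(\beta)\,n + o(n)$ with fluctuations of order $\sqrt{n}$.

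For $d\geq 3$, the upper bound can use the explicit sphere $S^*_{n,d}$ from \cref{UB:polar-sphere}: conditioning on $\Rbn{d}=S^*_{n,d}$ has positive marginal probability, and on that event the cost is a sum of $d(n-d)$ i.i.d.\ truncated exponentials of mean $\mu_1(\beta)$. The matching linear lower bound should come from adapting \cref{LB:general-graph-family} to the tilted measure, the key modification being to replace the bound $\Pr(\sum U_i\le L)\le (Le/m)^m$ of \cref{claim:uniforms-sum} by its analogue for sums of truncated exponentials, which by a standard exponential-moment argument is still of the form $(CL/(r(\beta)m))^m$ for some constant $C$. This together pins down $\mu^{(d)}_{n,\beta}=\Theta(n)$, and sharp concentration would then follow from the patching framework of \cref{thm:patchability-ineq} and \cref{thm:conc-general}, after restricting to the high-probability event $\{|\Rbn{d}|\le (d+\varepsilon)n\}$.

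The main obstacle, unsurprisingly, is the same one emphasised throughout the paper: precise control of $A^{(d)}_{n,m}$. Identifying $\rho^{(d)}(\beta)$ (and hence the constant $c_{d,\beta}$) requires a large-deviation rate function for $\log A^{(d)}_{n,m}$ as a function of $m/n$, which is currently out of reach and would in particular resolve Gromov's question. A reasonable intermediate target is to establish \cref{conj c beta} within a symmetrical sub-family of exponential complexity containing $S^*_{n,d}$, such as the locally constructible spheres covered by \cref{LC-sphere-scaling}; the outline above should go through there without further input. A secondary difficulty is that \cref{conj c beta} as stated does not cover a joint limit where $\beta=\beta_n\to\infty$ at an $n$-dependent rate: the transition from exponent $1$ to the conjectured $1-1/d$ of \cref{conj c} should happen at $\beta\sim n^{1/d}$, and interpolating the two regimes rigorously would require sharper enumeration than even the LC case provides.
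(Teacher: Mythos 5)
This statement is a \emph{conjecture}, not a theorem: the paper offers no proof, and indeed says explicitly that nothing is known about $e^{(d)}(\beta)$ in dimensions $d\geq 3$. So there is no paper argument to compare against; what you have written is a discussion of the conjecture's plausibility, which is the appropriate thing to produce.

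Within that frame, your structural analysis is correct and clean. The computation $Z_\beta=\sum_m A^{(d)}_{n,m}\,r(\beta)^m$ with $r(\beta)=(1-e^{-\beta})/\beta$ is right, as is the observation that the marginal law of the facet count is proportional to $A^{(d)}_{n,m}\,r(\beta)^m$ and that, conditionally on the sampled sphere, the facet costs are i.i.d.\ truncated exponentials with mean $\mu_1(\beta)$. For $d=2$ this genuinely settles the conjecture (the facet count is deterministic, $m=2n-4$, so the Boltzmann sphere is simply uniform and the cost is a sum of $2n-4$ i.i.d.\ bounded variables), giving $e^{(2)}(\beta)=1$ and $c_{2,\beta}=2\mu_1(\beta)$; this is consistent with, and slightly more explicit than, the paper's remark that it \emph{believes} $e^{(2)}(\beta)=1$. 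You also correctly name the main obstruction for $d\geq 3$, namely the lack of a large-deviation rate function for $\log A^{(d)}_{n,m}$ in $m/n$, and the natural intermediate target of symmetrical exponentially-bounded subfamilies such as LC spheres.

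The one place you overreach is the assertion that ``one expects $m^*(\beta,n)=\Theta(n)$'', from which you conclude $e^{(d)}(\beta)=1$ for all $d$. This does not follow from \cref{remark:LB-d-sphere}, which only gives a \emph{lower} bound $m\geq dn-O(1)$. For $d\geq 3$ and a fixed number of vertices $n$, the count $A^{(d)}_{n,m}$ can grow superexponentially in $m$ up to $m=O(n^{d+1})$; the factor $r(\beta)^m$ with $r(\beta)\in(0,1]$ need not be strong enough to pin the typical $m$ down to $\Theta(n)$, and for $\beta=0$ (where $r=1$) the typical $m$ is governed purely by the shape of $A^{(d)}_{n,m}$, about which nothing sharp is known. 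This is precisely why the paper refrains from asserting $e^{(d)}(\beta)=1$ for $d\geq 3$ and instead poses it as an open problem. You should present $e^{(d)}(\beta)=1$ as an additional hypothesis on the concentration of the facet count (provable only under strong assumptions on $A^{(d)}_{n,m}$, e.g.\ within the LC family where spheres with $m$ facets number $2^{O(m)}$), rather than as an expected consequence of the setup.
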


Note that for $\beta=0$ we  recover 
the uniform $n$-vertex triangulation of the $d$-sphere. In particular, for $d=2$, we have $\mu^{(2)}_{n,0}=\Theta(n)$. We believe that $e^{(2)}(\beta)=1$ for every $\beta\in [0,\infty)$. On the other hand, our \cref{thm:2sphere} says that $\mu^{(2)}_{n,\infty}=\Theta(n^{1/2})$ if we interpret $\beta=\infty$ appropriately. 
We know nothing about $e^{(d)}(\beta)$ in any dimension $d\geq 3$.

\begin{problem}
    For $d=1,2,\ldots$, determine  $e^{(d)}(\beta)$. In particular, is $e^{(d)}(\beta)$ a constant function of $\beta\in [0,1)$ for every $d$? 
\end{problem}

If \Cref{conj c beta} is true, it would be interesting to study $c_{d,\beta}$ as a function of $\beta$, in particular, to decide its smoothness. 

\medskip

Recall that the uniform $n$-vertex triangulation \Rntwo\ of the $2$-sphere converges, in the sense of Benjamini \& Schramm \cite{BeSchrRec}, as proved by Angel \& Schramm \cite{AnSchrUn}. It would be very interesting to generalise this to other values of $\beta$ (and dimensions), and to understand how the limit object varies with $\beta$: 

\begin{conj}
    For every $d\geq 2$, and every $\beta\in [0,\infty]$, \Rbn{d}\  converges in the sense of Benjamini \& Schramm as $n\to \infty$.
\end{conj}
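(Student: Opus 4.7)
The plan is to handle the cases $\beta\in[0,\infty)$ and $\beta=\infty$ separately, and within the former case to reduce to the setting of Angel--Schramm whenever possible. A crucial first observation is that the marginal distribution of $\Rbn{d}$ admits a clean form: integrating out the weights $w$, a sphere $S\in\Sd_n$ is selected with probability proportional to
\[
\int_Q e^{-\beta H(S,w)}\,d\lambda(w) = \rho(\beta)^{|E(S)|}, \qquad \rho(\beta):=\tfrac{1-e^{-\beta}}{\beta}\in(0,1].
\]
In dimension $d=2$, Euler's formula forces $|E(S)|=2n-4$ for \emph{every} $S\in\Stwo_n$. Consequently $\rho(\beta)^{|E(S)|}$ is a constant independent of $S$, and the marginal law of $\Rbn{2}$ coincides with the uniform distribution $\Rntwo$ for every finite $\beta$. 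The Angel--Schramm theorem then yields Benjamini--Schramm convergence in this regime for free, with limit equal to the UIPT.

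For $d\ge 3$ and $\beta\in[0,\infty)$, the marginal law becomes a genuine Gibbs-type measure on $\Sd_n$ penalising spheres by $\rho(\beta)^{m}$ where $m=|E(S)|$ can vary. Here I would attempt a two-step strategy. First, via an enumeration/analytic argument reminiscent of Tutte-style peeling, establish that with high probability the number of facets $|E(\Rbn{d})|$ is sharply concentrated around some deterministic $m_\beta(n)=(\theta(\beta)+o(1))n$, for an explicit $\theta(\beta)$ decreasing in $\beta$. Second, condition on the facet count and argue that the conditional law is uniform on spheres with that many facets, reducing the question to the convergence of \emph{uniform} random $d$-spheres with a prescribed facet-to-vertex ratio. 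This second reduction is the sticking point: even the base case $\beta=0$ is wide open for $d\ge 3$, so unconditional results require first resolving the higher-dimensional analogue of Angel--Schramm.

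For the limiting case $\beta=\infty$, where $\Rbn{d}$ is the minimum-volume spanning sphere studied here, I would try to leverage the patchability framework of Section~\ref{sec:2-conc} combined with the concentration results. The idea is to establish tightness of the local neighbourhoods by showing that with high probability a bounded-radius neighbourhood of a uniformly chosen vertex $v$ can be locally replaced (via a cheap patch supported on $o(n)$ facets) without significantly altering the total cost; this gives an approximate exchangeability that rules out pathological local structure. Identifying the limit, however, requires more: one would want to show that the ``optimal'' local configuration around $v$ is determined by the joint distribution of the weights of facets incident to $v$, conditioned on the global minimality of the sphere. Here an auxiliary objective convergence result (e.g.\ a cavity-method or W\"astlund-style replica analysis, as in the treatment of TSP on $K_n$ invoked in the proof of \cref{UB:polar-sphere}) seems the most promising route for $d=2$.

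The main obstacle is undoubtedly the $d\ge 3$ case at every value of $\beta$: even the uniform case is not known to converge, and for $\beta=\infty$ we lack the exact enumeration (Tutte) that underpins the $d=2$ analysis. A secondary obstacle, specific to $\beta=\infty$, is identifying the limit explicitly rather than merely proving tightness; the local limit should presumably interpolate between the UIPT (at $\beta=0$) and a conjectural ``ground-state'' random triangulation of $\R^d$ whose one-step distributions are governed by order statistics of the edge weights, but making this precise seems to require substantial new ideas beyond the scope of the present paper.
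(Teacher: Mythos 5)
This statement is labelled a \emph{conjecture} in the paper, and the paper offers no proof of it; there is nothing to compare you against, and your proposal rightly does not claim to close the conjecture either. What you do contribute is one genuinely interesting observation that the paper does not make explicit: under the paper's stated normalisation $Z_\beta = \sum_{S\in\Sd_n}\int_Q e^{-\beta H(S,w)}\,d\lambda(w)$ (a constant, so that $e^{-\beta H(S,w)}/Z_\beta$ is a joint density on $\Sd_n\times Q$), the marginal law of $S$ is proportional to $\int_Q e^{-\beta H(S,w)}\,d\lambda(w)=\rho(\beta)^{|E(S)|}$ with $\rho(\beta)=(1-e^{-\beta})/\beta$. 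Since every $2$-sphere on $n$ vertices has $|E(S)|=2n-4$, this makes the marginal of $\Rbn{2}$ exactly uniform for every finite $\beta$, so Benjamini--Schramm convergence of the $1$-skeleton for $d=2$, $\beta\in[0,\infty)$ reduces immediately to Angel--Schramm. That computation is correct as far as it goes.

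Two caveats are worth flagging. First, the paper's prose (``we believe that $e^{(2)}(\beta)=1$ for every $\beta\in[0,\infty)$'' and ``if we interpret $\beta=\infty$ appropriately'') suggests that a \emph{quenched} Gibbs model may have been intended, in which one first samples $w\sim\lambda$ and then $S$ with $w$-conditional density $e^{-\beta H(S,w)}/Z_\beta(w)$, $Z_\beta(w)=\sum_S e^{-\beta H(S,w)}$; only that version recovers the minimum-cost sphere as $\beta\to\infty$. Under the quenched law the marginal on $S$ is $\int_Q e^{-\beta H(S,w)}/Z_\beta(w)\,d\lambda(w)$, which does not factor through $|E(S)|$ and need not be uniform even when $|E(S)|$ is constant, so your $d=2$ reduction would not apply. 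You should make explicit which interpretation you are using, and note the apparent mismatch in the paper's definition. Second, for $d\geq 3$ (at any $\beta$) and for $\beta=\infty$, what you write is a survey of plausible strategies and openly acknowledged obstacles, not an argument; this is consistent with the conjecture being open, but it means your proposal, like the paper, leaves the statement unproved outside the special case above.
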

There are various ways to formulate this conjecture precisely; in its simplest form, we ask about the convergence of the $1$-skeleton of \Rbn{d}, considered as a graph with the costs ignored. 

\medskip
There is another well-known convergence result about the uniform $n$-vertex triangulation of the $2$-sphere: if we endow \Rntwo\  with the graph-metric, and rescale it appropriately (in such a way that its diameter stays bounded), then this metric space converges in distribution in the Gromov--Hausdorff sense as $n\to \infty$. The limit object is a random metric space, called the \emph{Brownian map}, which  is homeomorphic to $\mathbb{S}^2$ and yet has Hausdorff dimension $4$ \cite{LeGBro,MieBro,BaezBro}. The Brownian map and similar models of random fractal geometry are being studied extensively due to connections with mathematical physics, see e.g.\ \cite{GwyRan} and references therein. Our next question is how this generalises to our setup. Let $|\Rbn{d}|$ denote the metric space obtained from \Rbn{d}\ by rescaling the metric by a factor of $1/{\operatorname{diam}(\Rbn{d})}$.
\begin{problem}
    Does $|\Rbn{d}|$ converge in distribution in the Gromov--Hausdorff sense for every $d\geq 2$, and every $\beta\in [0,\infty]$? If yes, what is the Hausdorff dimension of the limit space as a function of $\beta$? In particular, is this function monotone in $\beta$?
\end{problem}

\medskip
All of the  definitions and questions of this section can be reformulated, and are equally interesting, when the random costs are placed on the $1$-cells rather than the $d$-cells of $K_n^{d}$, as in \cref{cor}. 

\section*{Acknowledgement}

We are grateful to Christoforos Panagiotis for a helpful discussion about \Cref{Outlook}.

\bibliographystyle{plain}
\bibliography{bib}
\end{document}